
\documentclass[reqno]{amsart}
 \usepackage{varioref}
 \usepackage{wrapfig}
 \usepackage{threeparttable}
 \usepackage{dcolumn}
  \newcolumntype{d}{D{.}{.}{-1}}
 \usepackage{nomencl}
  \makeglossary
 \usepackage{subfigure}
 \usepackage{subfigmat}
 \usepackage{fancyvrb}
  \fvset{fontsize=\footnotesize,xleftmargin=2em}
 \usepackage{lettrine}
 \usepackage[colorlinks]{hyperref}
 \usepackage{multicol}
 \usepackage{enumitem}
\hypersetup{
    colorlinks,
    linkcolor={red!70!black},
    citecolor={blue!70!black},
    urlcolor={blue!50!black}
}
\usepackage{graphicx}
\usepackage{psfrag}
\usepackage{tikz}
\usetikzlibrary{matrix}

\usepackage{amssymb}
\usepackage{amsthm}
\usepackage{amsrefs}

\usepackage{caption}

 \definecolor{ml1}{rgb}{    0,    0.4470,    0.7410}
 \definecolor{ml2}{rgb}{     0.8500   , 0.3250   , 0.0980}
 \definecolor{ml3}{rgb}{     0.9290   , 0.6940  ,  0.1250}
 \definecolor{ml4}{rgb}{     0.4940  ,  0.1840  ,  0.5560}
  \definecolor{ml5}{rgb}{    0.4660  ,  0.6740  ,  0.1880}
  \definecolor{ml6}{rgb}{    0.3010  ,  0.7450  ,  0.9330}
  \definecolor{ml7}{rgb}{    0.6350  ,  0.0780 ,   0.1840}
  
\renewcommand{\exp}[1]{{\bf e}^{#1}}
\newcommand{\expl}[1]{\textrm{exp} \left[#1\right]}

\newcommand{\T}{\mathsf T}

\newcommand{\M}{{\mathsf M}}
\newcommand{\RR}{{\mathbb R}}
\newcommand{\PP}{{\mathsf P}}
\newcommand{\Su}{{\mathsf S}}
\newcommand{\thetae}{\phi}

\newcommand{\xm}{{\bar x}}

\newcommand{\lb}{\left\{}
\newcommand{\rb}{\right\}}

\newcommand{\bb}[0]{\begin{bmatrix}}
\newcommand{\eb}[0]{\end{bmatrix}}
\newcommand{\be}[0]{\begin{equation}}
\newcommand{\ee}[0]{\end{equation}}
\newcommand{\ba}[0]{\begin{align}}
\newcommand{\ea}[0]{\end{align}}
\newcommand{\ben}[0]{\begin{equation*}}
\newcommand{\een}[0]{\end{equation*}}
\newcommand{\rep}[0]{\eqref}

\newcommand{\norms}[1]{\lVert#1\rVert}

\let\norm=\normB

\newcommand{\abs}[1]{\lvert#1\rvert}

\renewcommand{\Re}[0]{\mathbb R}



 \newtheorem{thm}{Theorem}
 \newtheorem{lem}[thm]{Lemma}
 
 \newtheorem{cor}[thm]{Corollary}
 \newtheorem{defn}{Definition}

 \newtheorem{rem}{Remark}

\usepackage{ulem}
\usepackage{etoolbox}
\usepackage{color}
\newtoggle{showcommentsone}

\settoggle{showcommentsone}{false}

\iftoggle{showcommentsone}{
\newcommand{\benone}[1]{{\color{blue}#1}}
}{
\newcommand{\benone}[1]{#1}
}
\iftoggle{showcommentsone}{
\newcommand{\bencrossone}[1]{{\color{red}\sout{#1}}}
}{
\newcommand{\bencrossone}[1]{}
}

\title[Convergence Properties of Adaptive Systems]{Convergence Properties of Adaptive Systems and the Definition of Exponential Stability}


  \author[B.M. Jenkins ]{Benjamin M. Jenkins}
 \address{PhD Candidate, Department of Mechanical Engineering MIT, Rm 3-441, 77 Massachusetts Avenue, Cambridge MA, 02139,USA}
 \email{bjenkins@mit.edu}

 \author[A.M. Annaswamy]{ Anuradha M. Annaswamy}
 \address{Director of Active-Adaptive Controls Lab, Department of Mechanical Engineering, MIT, 77 Massachusetts Avenue, Cambridge MA, 02139,USA } 
\email{aanna@mit.edu}

\author[E. Lavretsky]{Eugene Lavretsky}
\address{Senior Technical Fellow, Boeing Research and Technology, 5301 Bolsa Avenue, MC H017-D334, Huntington Beach, CA, 92648, USA}
\email{eugene.lavretsky@boeing.com}

 \author[T.E. Gibson]{Travis E. Gibson}
 \address{Research Fellow in Medicine, Harvard Medical School and Channing Laboratory, Brigham and Women's Hospital, 181 Longwood Avenue, Boston MA, 02115, USA}
 \email{tgibson@mit.edu}
 \email{travis.gibson@channing.harvard.edu}


\begin{document}

\maketitle

\begin{abstract}

The convergence properties of adaptive systems in terms of excitation conditions on the regressor vector are well known. With persistent excitation of the regressor vector in model \benone{reference} adaptive control the state error and the adaptation error are globally exponentially stable, or equivalently, exponentially stable in the large. When the excitation condition however is imposed on  the reference input or the reference model state it is often incorrectly concluded that the persistent excitation in those signals also implies exponential stability in the large. The definition of persistent excitation is revisited so as to address some possible confusion in the adaptive control literature. 
It is then shown that persistent excitation of the reference model only implies local persistent excitation (weak persistent excitation). Weak persistent excitation of the regressor is still sufficient for uniform asymptotic stability in the large, but not exponential stability in the large. We show that there exists an infinite region in the state-space of adaptive systems where the state rate is bounded. This infinite region with finite rate of convergence is shown to exist not only in classic open-loop reference model adaptive systems, but also in a new class of closed-loop reference model adaptive systems.


\end{abstract}

\section{Introduction}

It is well known that stability \benone{of the origin} and asymptotic convergence of the tracking error \benone{to zero} can be guaranteed in adaptive systems with no restrictions on the external reference input. Asymptotic stability, i.e. convergence of both the tracking error \benone{\textit{{and}}} parameter error \benone{to zero}, can occur only with further conditions of persistent excitation \benone{ are satisfied. }\bencrossone{ on the reference input. } The first known work on asymptotic stability of adaptive systems can be found in \cite{lio67}. In that work asymptotic stability of adaptive schemes was proven for a class of periodic inputs using results from \cite{las62}. The results hinged on a sufficient condition related to the richness of frequency content in the regressor vector of the adaptive system. In the late 70's and early 80's several attempts were made to extend the results of \cite{lio67} to uniform asymptotic stability. Morgan and Narendra proved necessary and sufficient conditions for uniform asymptotic stability for classes of linear time varying \benone{ (LTV) } systems  in \cite{mor77_1,mor77_2} that \benone{ are consistent with } the structure of adaptive systems. Anderson leveraged techniques developed in \cite{and69} to prove the exponential stability of adaptive systems in \cite{and77} with Kreisselmeier using similar techniques in \cite{kre77}. Following these results the persistent excitation conditions for asymptotic stability were moved from the regressor vector to \benone{ richness }conditions on the actual reference model input in references \cite{yua77,and82,boy83,boyd1986necessary,nar87}. This was a key step for practical reasons as the control engineer has direct control over the reference input rather than the regressor.


A key distinction exists, however, between the stability properties of the linear time-varying systems studied in \cite{and77} and those of the adaptive systems in \cite{mor77_2}, and forms the starting point for the discussions in this paper. The linear time-varying system in \cite{and77} can be shown to be {\it exponentially stable} under persistent excitation conditions on the underlying regressor. \benone{ However, once the excitation condition is moved to the reference input, the  }\bencrossone{ The } adaptive systems in \cite{mor77_2} can \benone{ only be shown to be \textit{uniformly asymptotically stable}. } \bencrossone{ however be shown to be only {\it uniformly asymptotically stable} under persistent excitation conditions on the external reference input. } \benone{ This distinction arises from the endogenous nature of the underlying regressor and is explicitly pointed out in this paper. The degree of persistent excitation of the regressor is dependent on the adaptive system initial conditions. This dependency prevents a uniform correlation between degree of persistent excitation (i.e. rate of exponential convergence) of the adaptive systems internal regressor and the richness of the reference input.  } \bencrossone{ This distinction is explicitly pointed out in this paper }\benone{ The practical implication of this is that the adaptive systems of \cite{mor77_2} are not  exponentially stable in the large. This distinction between local and global exponential stability is an essential detail when the exponential stability of a system is used to claim robustness properties. } \bencrossone{ where we will show in fact that the adaptive system cannot be shown to be exponentially stable in the large. } Moreover, an \bencrossone{ specific }  infinite region will be shown to exist \benone{ in the velocity field } where the norm of the error \bencrossone{ vector field } \benone{ velocity }is finite. As a result, a \benone{ subset of the state-space }{\bencrossone{  sticking regime }} will be shown to exist where the error signals move arbitrarily slowly. Unlike exponentially stable systems, the system's convergence speed decreases as the distance from the equilibrium increases.

 \benone{  }\benone{ No excitation (richness) conditions   on the reference input  exist which will globally guarantee persistent excitation of the systems internal regressor vector. }\bencrossone{ ersistence of excitation conditions on the reference input, also referred to as sufficient richness, does not imply persistence of excitation of the regressor vector. } The authors of \cite{boy83,boyd1986necessary} are careful in proving that richness of the reference input only implies exponential convergence. The careful wording of {{\textit{convergence}}} however was changed to exponential \textit{stability} countless times elsewhere in the literature. It was then inappropriately  concluded that uniform asymptotic stability in the large is equivalent to exponential stability in the large for adaptive systems.

Recently, a new class of adaptive systems has been under discussion (see \cite{gib13access,gib13ecc,gib12,gib13acc1,gib15tac,gibson_phd}) which employ a closed-loop in the underlying reference model. These adaptive
systems have \bencrossone{ a } desirable transient response \benone{ characteristics such as }\bencrossone{ which leads to } an improved tracking
error whose L-infinty and L-2 norms are small compared to their open-loop counterparts.
\benone{ In addition, the rates of closed-loop signals such as the control input and and control parameter have small magnitudes when compared to open-loop reference model adaptive systems. } \bencrossone{ More importantly, the closed-loop signals such as the control input and control
parameter have derivatives that have small magnitudes as well when compared to
open-loop reference model systems. } \benone{  } In reference \cite{jenkins2013convergence,nouwens_analysis_2015}, it was shown that the \bencrossone{ sticking
region } \benone{region of slow convergence }that is present in the standard adaptive system with Open-loop Reference
Models (ORM) \cite{jenkins2013uniform} is present \benone{ in this new class} of Closed-loop Reference Model (CRM)-based adaptive
systems as well.

This article is intended to be a cautionary \bencrossone{ tale }  \benone{ piece }and complements the works of \cite{panteley2001relaxed} and \cite{loria2002} in carefully defining persistent excitation and a weaker \bencrossone{ version } \benone{ condition }that is not uniform in initial conditions. Where as \cite{panteley2001relaxed} and \cite{loria2002} focus on the various stability results when two different kinds of persistent excitation are studied, we illustrate why, in general, adaptive systems can not satisfy the original \bencrossone{ strong version } \benone{ definition }of persistent excitation. \bencrossone{ Along that ideology  }We pick up where \cite{nar87} left off and in so doing hope to clarify the true stability properties of adaptive systems. We connect the stability results of general adaptive systems to the \bencrossone{ sticking } region \benone{ of slow convergence} in low dimensional adaptive systems that occur with ORM and CRM. The paper is organized as follows: Section 2 \bencrossone{ contains } \benone{ reviews the }definitions for various kinds of stability, Section 3 discusses the relationship between persistent excitation and asymptotic stability of adaptive systems, Section 4 constructs examples and proves the lack of exponential stability in the large \benone{ even }for \benone{low order }adaptive systems, Section 5 contains simulation results \benone{ which depict the nature of this slow convergence}, and Section 6 \benone{ summarizes our findings} \bencrossone{ contains the conclusion }.


\section{Stability Definitions}

Consider a dynamical system defined by the following relations
\begin{align*}
x(t_0) &= x_0 \\
\dot x(t) &= f(x(t),t)
\end{align*}
where ${t\in [t_0,\infty)}$ is time and $x\in \RR^n$ denotes the state vector. We are interested in systems with equilibrium at $x=0$, so that $f(0,t)=0$ for all $t$. The solution to the differential equation above for $t\geq t_0$ is a transition function $s(t;x_0,t_0)$ such that $\dot s(t;x_0,t_0)= f(s(t;x_0,t_0),t)$ and $s(t_0;x_0,t_0) = x_0$. Various definitions of stability now follow \cite{mas56,kal60,hah67}. Figure \ref{fig1} can be used as an aid.

\begin{figure}\subfigure[Stability and asymptotic stability visualized, adapted from \cite{kal60}*{Figure 5} ] {
  \psfrag{b}[cc][cc][.7]{$\delta, \rho$}
  \psfrag{a}[cc][cc][.7]{{${\epsilon}$}}
   \psfrag{t}[cc][cc][.7]{$t$}
    \psfrag{c}[cc][cc][.7]{{$\eta$}}
      \psfrag{d}[cc][cc][.7]{$x_0$}
     \psfrag{f}[cc][cc][.7]{{$t_0$}}
      \psfrag{h}[cl][cu][.7]{{$t_0+{T}$}}
        \psfrag{g}[cc][cc][.7]{$s$}
        \includegraphics[width=.5\textwidth]{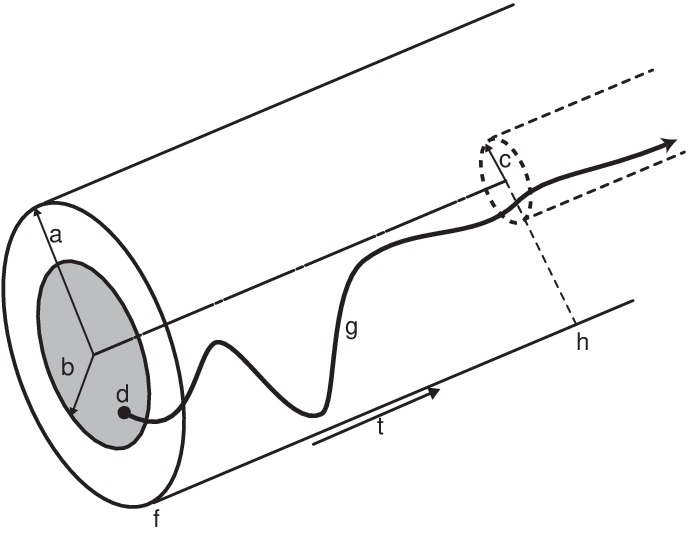}\label{fig1a}}
\subfigure[Exponential stability visualized.] {
  \psfrag{b}[cr][cr][.7]{{ ${\rho}$}}
  \psfrag{a}[cr][cr][.7]{{${\kappa}\norm{x_0}$}}
   \psfrag{t}[cc][cc][.7]{$t$}
    \psfrag{c}[cc][cc][.7]{${\eta}$}
      \psfrag{d}[cc][cc][.7]{$x_0$}
     \psfrag{f}[cc][cc][.7]{$t_0$}
      \psfrag{h}[cl][cu][.7]{$\kappa \norm{x_0} \exp{-\nu(t-t_0)}$}
        \psfrag{g}[cc][cc][.7]{$s$}
\includegraphics[width=.5\textwidth]{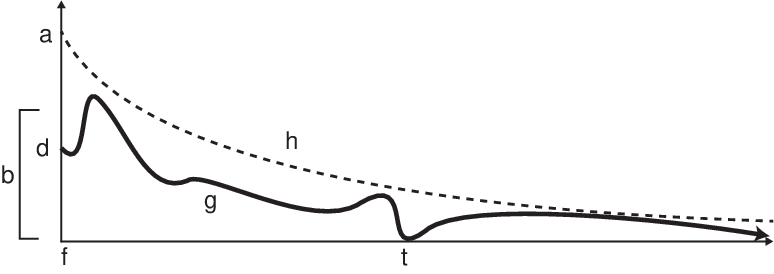}\label{fig2a}}%
\caption{Visual aids for stability discussion.}\label{fig1}
\end{figure}

\begin{defn}[Stability and Asymptotic Stability] Let $t_0\geq 0$, the equilibrium is
\label{def:stable} 
\begin{enumerate}
\item[\textnormal{(i)}] {\em Stable}, if for all $\epsilon>0$ there exists a $\delta(\epsilon,t_0)>0$ such that $\norm{x_0}\leq \delta$ implies $\norm{s(t; x_0, t_0)} \leq \epsilon$ for all $t \geq t_0$.
\item[\textnormal{(ii)}] {\em Attracting}, if there exists a $\rho(t_0)>0$ such that for all $\eta>0$ there exists an attraction time $T(\eta,x_0,t_0)$ such that $\norm{x_0}\leq\rho$ implies $\norm{s(t;x_0,t_0)} \leq\eta$ for all $t\geq t_0+T$.
\item[\textnormal{(iii)}] {\em Asymptotically Stable}, if it is stable and attracting.
\item[\textnormal{(iv)}] {\em Uniformly Stable} if the $\delta$ in {\rm (i)} is uniform in $t_0$ and $x_0$, thus taking the form $\delta(\epsilon)$.
\item[\textnormal{(v)}] {\em Uniformly Attracting}, if it is attracting where the $\rho$ and $T$ do not depend on $t_0$ or $x_0$ and thus the attracting time take the form $T(\eta,\rho)$.
\item[\textnormal{(vi)}] {\em Uniformly Asymptotically Stable},  {\upshape (UAS)} if it is uniformly stable and uniformly attracting.
\item[\textnormal{(vii)}] {\em Uniformly Bounded} if for all $r>0$ there exists a $B(r)$ such that $\norm{x_0}\leq r$ implies that $\norm{s(t; x_0, t_0)}\leq B$ for all $t\geq t_0$.
\item[\textnormal{(viii)}] {\em Uniformly Attracting in the Large} if for all $\rho>0$ and $\eta>0$ there exists a  $T(\eta,\rho)$ such that $\norm{x_0}\leq\rho$ implies $\norm{s(t;x_0,t_0)} \leq\eta$ for all $t\geq t_0+T$.
\item[\textnormal{(ix)}] {\em Uniformly Asymptotically Stable in the Large}  {\upshape (UASL)} if it is uniformly stable, uniformly bounded, and  uniformly attracting in the large.
\end{enumerate}
\end{defn}

The definitions of exponential stability are not as prevalent as those above, and are assembled below  \cite{mas56, malkin35}


\begin{defn}[Exponential Stability] Let $t_0\geq 0$, the equilibrium is
\label{def:exp} 
\begin{enumerate}
\item[\textnormal{(i)}] {\em Exponentially Stable} {\upshape (ES)} if for every $\rho>0$ there exists ${\nu(\rho)>0}$ and ${\kappa(\rho)>0}$ such that $\norm{x_0}\leq \rho$ implies $\norm{s(t;x_0,t_0)} \leq \kappa \norm{x_0} \exp{-\nu(t-t_0)}$
\item[\textnormal{(ii)}] {\em Exponentially Stable in the Large}  {\upshape (ESL)} if there exists ${\nu>0}$ and ${\kappa>0}$ such that $\norm{s(t;x_0,t_0)} \leq \kappa \norm{x_0} \exp{-\nu(t-t_0)}$ for all $x_0$.
\end{enumerate}
\end{defn}


\begin{rem}\label{rem:uasl}ESL implies UASL by choosing $T(\rho,\eta)=\frac{1}{\nu}\log\bigl(\frac{\kappa \rho}{\eta}\bigr)$.
It is clear that for UASL, $T$ is a function of both $\rho$ and $\eta$. But for ESL, $T$ depends only on $\eta/\rho$. In other words, if in a system, it can be shown that $T$ is a general function of $\eta$ and $\rho$ and varies even when $\eta/\rho$ is a constant, then it follows that the associated equilibrium is only UASL and not ESL.
 \end{rem}

For linear systems, i.e. $\dot x = A(t)x$,  UAS implies ESL \cite[Theorem 3: (C) and (D)]{kal60}. Thus, for linear systems all of the definitions are equivalent.
The relationship between these definitions of stability are illustrated in the following implication diagram. 


{\centering
\begin{tikzpicture}
  \matrix (m) [matrix of math nodes,row sep=3em,column sep=3em,ampersand replacement=\&]
  {\text{ESL} \& \text{ES}    \& \\
             \&        \text{UASL}         \& \text{UAS}  \\};
  \path[-stealth]
    (m-1-1) edge [double]  (m-1-2)
    (m-1-2) edge [double]  (m-2-2)
    (m-1-2) edge [double]  (m-2-3)
    (m-2-2) edge [double]  (m-2-3)
    (m-2-3) edge [double,bend left=45] node[below left] {\footnotesize{+ Linear}} (m-1-1);
\end{tikzpicture} \\ }


\section{Asymptotic and Exponential Stability of Adaptive Systems}

We now present two adaptive systems which arise in the context of identification and control. The following definition of persistent excitation is relevant for exponential stability of adaptive systems. 
\begin{defn}[Persistent Excitation]  \label{defn:pe}Let $\omega\in [t_0,\infty)\to \Re^p$ be a time varying parameter with initial condition defined as $\omega_0 = \omega(t_0)$, then the parameterized function of time ${y(t,\omega):  [t_0,\infty) \times \Re^p \to \Re ^m}$ is
\label{def:pe} 
\begin{enumerate}
\item[\normalfont{(i)}] {\em Persistent Excitation}  $(\mathrm{PE})$ if there exists  ${T>0}$  and $\alpha>0$ such that \ben\int_t^{t+T} y(\tau,\omega) y^\mathsf{T} (\tau,\omega) d\tau \succeq \alpha I \een for all $t\geq t_0$ and $\omega_0 \in \Re^p$, and we denote this as $y(t,\omega)\in \mathrm{PE}$.
\item[\normalfont{(ii)}] {\em weak Persistent Excitation} $(\mathrm{PE}^*(\omega,\Omega))$ if there exists  a compact set $\Omega\subset \Re^p$, $T(\Omega)>0, \alpha(\Omega)$ such that \ben\int_t^{t+T} y(\tau,\omega) y^\mathsf{T}(\tau,\omega) d\tau \succeq \alpha I \een for all ${\omega_0\in\Omega}$ and ${t\geq t_0}$, and we denote this as ${y(t,\omega)\in\mathrm{PE}^*(\omega,\Omega)}$.
\end{enumerate}
\end{defn}
\noindent The PE definition is well known in the literature \cite{annbook,ioabook,nar87}, while the weak PE, denoted as PE$^*$, is introduced in this paper, and will be used to characterize convergence in adaptive systems.

\subsection{Identification in Simple Algebraic Systems \cite[]{annbook}}
Let $u:\mathcal [t_0,\infty)  \to \Re^n$ be the input and $y:[t_0,\infty) \to \Re$ be the output of the following algebraic system of equations
\ben
y(t) = u^\mathsf{T}(t) \theta 
\een
where $\theta\in \Re^n$ is an unknown parameter. If we assume that $u$ is known and $y$ is measurable, then an estimate of the unknown parameter $\hat \theta:[t_0,\infty)\to \Re^n$ can be used in constructing an adaptive observer
\ben\hat y (t)= u^{\mathsf T}(t) \hat \theta(t)
\een
where the update for the estimate of the uncertain parameter is defined as
\ben
\dot {\hat \theta}(t) = - u(t) \left( \hat y(t) - y(t) \right) .
\een
Denoting the parameter error as $\phi(t) = \hat \theta(t)- \theta$ the parameter error evolves as
\be\label{eq:algebraadaptive}
\dot {\phi}(t) = - u(t) u^\mathsf{T}(t) \phi(t).
\ee

\begin{thm}\label{thm:penar}
If $u(t)$  is PE, piecewise continuous, and either 1) there exists $\beta>0$ such that
\ben\int_t^{t+T} u(\tau) u^\mathsf{T} (\tau) d\tau \preceq \beta I  \een
or 2) there exists a $u_\mathrm{max}>0$ such that $\norm{u(t)}\leq u_\mathrm{max}$, then for the dynamics in  \eqref{eq:algebraadaptive} the equilibrium $\phi=0$ is ESL.
\end{thm}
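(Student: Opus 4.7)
Since \eqref{eq:algebraadaptive} is a linear time-varying system $\dot\phi = A(t)\phi$ with $A(t) = -u(t)u^{\mathsf T}(t)$, and since UAS implies ESL for linear systems (as noted just before the implication diagram in Section 2), it suffices to establish UAS of the origin. My plan is a Lyapunov analysis with $V(\phi) = \tfrac{1}{2}\phi^{\mathsf T}\phi$, carried out in three steps.

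First, differentiating along trajectories yields $\dot V = -(u^{\mathsf T}(t)\phi)^2 \leq 0$, so $V$ and hence $\|\phi\|$ are non-increasing on $[t_0,\infty)$. This immediately delivers uniform stability (with $\delta=\epsilon$) and uniform boundedness. Second, to upgrade to uniform attraction I would show that $V$ contracts by a definite factor over every window of length $T$, where $T$ is the PE constant from Definition \ref{def:pe}. Integrating $\dot V$ gives
\begin{equation*}
V(t) - V(t+T) = \int_t^{t+T}(u^{\mathsf T}(\tau)\phi(\tau))^2\,d\tau,
\end{equation*}
so the aim is a lower bound of the form $\gamma V(t)$ with $\gamma\in(0,1)$, whence $V(t+T)\leq(1-\gamma)V(t)$ and UAS follows by iteration together with uniform stability at intermediate times.

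Third, to extract this lower bound I would decompose $u^{\mathsf T}(\tau)\phi(\tau) = u^{\mathsf T}(\tau)\phi(t) + u^{\mathsf T}(\tau)[\phi(\tau)-\phi(t)]$ and apply $(a+b)^2 \geq \tfrac12 a^2 - b^2$. The dominant piece integrates to $\tfrac12\phi^{\mathsf T}(t)\bigl[\int_t^{t+T}uu^{\mathsf T}d\tau\bigr]\phi(t)\geq \tfrac{\alpha}{2}\|\phi(t)\|^2$ by the PE hypothesis. The perturbation $\phi(\tau)-\phi(t) = -\int_t^\tau u(s)u^{\mathsf T}(s)\phi(s)\,ds$ is bounded, using $\|\phi(s)\|\leq\|\phi(t)\|$ from Step~1, by a multiple of $\|\phi(t)\|$ whose constant depends on $\beta$ under hypothesis~(1) and on $u_{\max}$ and $T$ under hypothesis~(2); this is the one place where the two alternative boundedness assumptions enter, on equal footing.

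I expect the main obstacle to be the perturbation estimate itself: a crude Cauchy--Schwarz bound on $\int_t^{t+T}(u^{\mathsf T}[\phi(\tau)-\phi(t)])^2\,d\tau$ can be comparable to or exceed $\tfrac{\alpha}{2}\|\phi(t)\|^2$ when $\beta$ or $u_{\max}^2 T$ is not small relative to $\alpha$, so the naive decomposition does not immediately yield $\gamma\in(0,1)$. The standard remedy is either to probe with the direction $\phi(t+T)/\|\phi(t+T)\|$ and play off the two sides of the energy identity via a dichotomy argument (\emph{either} we already have a contraction, \emph{or} the perturbation term is controlled by $\|\phi(t+T)\|^2$), or to subdivide $[t,t+T]$ into finitely many shorter pieces on which the drift is small and combine the per-subinterval estimates. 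Either route produces an explicit contraction rate $\gamma$ in terms of $\alpha$, $T$, and the relevant boundedness constant, hence an explicit ESL rate for $\phi$.
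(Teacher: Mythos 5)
Your overall architecture is the same as the paper's second proof (the one following Narendra--Annaswamy): take $V=\tfrac12\phi^{\mathsf T}\phi$, show $V(t+T)\leq(1-\gamma)V(t)$ over each PE window, and upgrade UAS to ESL by linearity. But the proposal stops short of a proof at exactly the point you flag yourself: the perturbation estimate. Your quadratic decomposition with $(a+b)^2\geq\tfrac12 a^2-b^2$ yields a perturbation term of order $(u_{\max}^2T)^2\|\phi(t)\|^2$ (or $\beta^2$-sized under hypothesis~1), which exceeds $\tfrac{\alpha}{2}\|\phi(t)\|^2$ whenever the excitation window is not ``slow,'' and you defer the repair to one of two unexecuted alternatives (a dichotomy probe with $\phi(t+T)$, or subdivision of $[t,t+T]$). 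Both are known to be workable --- this is essentially how Anderson-style proofs and several textbook treatments proceed --- but note that naive subdivision does not obviously help here, since $\int uu^{\mathsf T}\,d\tau$ over a subwindow is still only bounded by $\beta I$ while the PE lower bound does not automatically localize to any particular subwindow; carrying either remedy to completion requires real work. As written, the decisive contraction constant $\gamma\in(0,1)$ is asserted to exist rather than produced.

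The paper closes this gap by working at the $L_1$ rather than the $L_2$ level, which sidesteps the obstacle entirely. First, boundedness of $u$ converts PE into $\int_t^{t+T}\lvert u^{\mathsf T}(\tau)w\rvert\,d\tau\geq\alpha/u_{\max}$ for unit $w$ (since $\lvert\tilde u^{\mathsf T}w\rvert^2\leq\lvert\tilde u^{\mathsf T}w\rvert$ when $\|\tilde u\|\leq1$). Cauchy--Schwarz gives $\sqrt{T\,(V(t)-V(t+T))}\geq\int_t^{t+T}\lvert u^{\mathsf T}\phi(\tau)\rvert\,d\tau=:I$. The reverse triangle inequality then splits $I$ exactly as in your decomposition, but the crucial move is that the perturbation piece is bounded by $u_{\max}^2T\cdot I$ itself, i.e.\ by the very integral being estimated: $\int\lvert u^{\mathsf T}[\phi(t)-\phi(\tau)]\rvert\,d\tau\leq u_{\max}T\sup_\tau\|\phi(t)-\phi(\tau)\|\leq u_{\max}T\int\|\dot\phi\|\leq u_{\max}^2T\,I$. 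Solving $I\geq\|\phi(t)\|\alpha/u_{\max}-u_{\max}^2T\,I$ gives $I\geq\|\phi(t)\|\,(\alpha/u_{\max})/(1+u_{\max}^2T)$ with no smallness condition on $u_{\max}^2T$, and squaring yields the explicit rate $r_{\mathrm{con}}=1-\tfrac{2\alpha^2/u_{\max}^2}{T(1+u_{\max}^2T)^2}$. If you want to keep your $L_2$ decomposition, the self-referential bounding of the error term by the target integral is the idea you are missing. Separately, the paper also gives a second, structurally different proof via uniform complete observability of the pair $(\,-uu^{\mathsf T},\,u^{\mathsf T})$ following Anderson, which is where hypothesis~1 (the upper bound $\beta$) is used in earnest; your proposal does not resemble that route.
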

The proof is given in two flavors the first follows that of \cite{and77} and the second follows that of \cite{annbook}, and then the two methods are compared.\begin{proof}[Proof of the theorem following Anderson {\cite[proof of Theorem 1]{and77}}]
 The existence of $T$, $\alpha$, and $\beta$ such that $ \alpha I \preceq \int_t^{t+T} u(\tau) u^\mathsf{T} (\tau) d\tau\preceq\beta I$ is equivalent to the following system being {\em uniformly completely observable} $\Sigma_1:\ \dot x_1 = 0_{n\times n} x_1, \ y_1 = u^{\mathsf T}(t) x_1$ \cite[Definition (5.23) dual of (5.13)]{kal_contr}. This in turn implies that ${\Sigma_2}:{\dot x_2 = -u(t)u^{\mathsf T}(t)x_2}, \ {y_2 = u^{\mathsf T}(t) x_2}$ is uniformly completely observable as well  \cite[Dual of Theorem 4]{and69}. Therefore, there exists $\alpha_2$ and $\beta_2$ such that 
\be\label{eq:proof:algebra1}
 \alpha_2 I \preceq\int_t^{t+T}  \Phi_2^{\mathsf T}(\tau,t)u(\tau) u^\mathsf{T} (\tau)  \Phi_2(\tau,t) d\tau \preceq \beta_2 I
\ee
where $\Phi_2(t,t_0)$ is the state transition matrix for $\Sigma_2$. Note that the upper bound $\beta$ is needed to ensure that $\Phi_2(\tau,t)$ is not singular, $$\det \Phi_2 (t,t_0) = \expl{-\int_{t_0}^{\mathsf T} \textrm{trace}(u(\tau)u^\mathsf{T}(\tau))\, d\tau}.$$

Let $V(\phi,t)=\frac{1}{2}\phi^{\mathsf{T}}(t)\phi(t)$ and note that $\Sigma_2$ and \eqref{eq:algebraadaptive} have the same state transition matrix. Thus $\phi(t;t_0)=\Phi_2(t,t_0) \phi(t_0)$. Differentiating $V$ along the system trajectories in \eqref{eq:algebraadaptive} we have $\dot V(\phi,t;t_0) = -\phi^\T(t_0) \Phi_2^{\mathsf T}(t,t_0)u(t) u^\mathsf{T} (t)  \Phi_2(t,t_0) \phi(t_0)$.  Using the bound in \eqref{eq:proof:algebra1} and integrating as $\int_t^{t+T} \dot V(\phi,\tau;t) d\tau $, it follows that ${V(t+T) -V(t)} \leq -2 \alpha_2V(t)$. Thus $V(t+T) \leq (1-2\alpha_2) V(t)$ and therefore the system is UASL and due to linearity it follows that the systems is ESL.
\end{proof}
\begin{proof}[Proof of the theorem following Narendra and Annaswamy {\cite[proof of Theorem 2.16]{annbook}}]
First we note that $u(t)$ being PE is equivalent to
\ben
\int_{t}^{t+T} \abs{u^\T(\tau) w}^2 d\tau \geq \alpha
\een
holding for any fixed unitary vector $w$. Let $\tilde u(t) \triangleq \frac{u(t)}{u_\mathrm{max}}$, then it follows that 
\ben\begin{split}
\int_{t}^{t+T} \abs{u^\T(\tau) w}^2 d\tau &= u_\mathrm{max}^2 \int_{t}^{t+T} \abs{\tilde u^\T(\tau) w}^2 d\tau\\
&\leq  u_\mathrm{max}^2 \int_{t}^{t+T} \abs{\tilde u^\T(\tau) w}d\tau
\end{split}\een
where the second line of the above inequality follows due to the fact that $\norm {\tilde u}\leq 1$ and thus $\abs{\tilde u^\T(\tau) w}^2 \leq \abs{\tilde u^\T(\tau) w} $. Therefore, $u$ being PE and bounded implies that 
\be\label{eq:puu1}
\frac{\alpha}{u_\mathrm{max}} \leq \int_{t}^{t+T} \abs{ u^{\mathsf T}(\tau) w} d\tau.
\ee
The above bound will be called upon shortly. Moving forward with the proof, consider the Lyapunov candidate $V(\phi,t)=\frac{1}{2}\phi^{\mathsf T}(t)\phi(t)$. Then differentiating along the system directions it follows that $\dot V(\phi,t)= - \phi^{\mathsf T}(t)u(t) u^{\mathsf T}(t) \phi(t)$. Integrating $\dot V$ and using the Cauchy Schwartz inequality it follows
\ben\begin{split}
-\int_t^{t+T}\dot V(\phi,\tau) d\tau & = \int_t^{t+T}  \abs{ u^{\mathsf T}(\tau) \phi(\tau)}^2 d\tau\\  &\geq \frac{1}{T}\left( \int_t^{t+T}  \abs{ u^{\mathsf T}(\tau) \phi(\tau)} d\tau \right)^2.
\end{split}\een
The above inequality can equivalently be written as 
\be\label{eq:puu2}
 \sqrt{{T}({V(t)-V(t+T)})} \geq \int_t^{t+T}  \abs{ u^{\mathsf T}(\tau) \phi(\tau)} d\tau.
\ee
Using the reverse triangle inequality, the righthand side of the inequality in \eqref{eq:puu2} can be bounded as 
\begin{equation}\label{eq:puu3}
  \int_t^{t+T} \abs{ u^{\mathsf T}(\tau) \phi(\tau)} d\tau \geq \int_t^{t+T}  \abs{ u^{\mathsf T}(\tau) \phi(t)} d\tau - 
  \int_t^{t+T}  \abs{ u^{\mathsf T}(\tau) [\phi(t)-\phi(\tau)]} d\tau.
\end{equation}
Using the bound in \eqref{eq:puu1} the first integral on the righthand side of the above inequality can be bounded as
\be\label{eq:puu4}
\int_t^{t+T}  \abs{ u^{\mathsf T}(\tau) \phi(t)} d\tau \geq \norm{\phi(t)} \frac{\alpha}{u_\mathrm{max}}.
\ee
The second integral on the righthand side of \eqref{eq:puu3} can be bounded as
\be
\begin{split}\label{eq:puu5}
\int_t^{t+T}  \abs{ u^{\mathsf T}(\tau) [\phi(t)-\phi(\tau)]} d\tau  &\leq  u_\mathrm{max} T \sup_{\tau \in [t,t+T]}\norm{\phi(t)-\phi(\tau)}\\  & \leq  u_\mathrm{max} T \int_t^{t+T}  \norm{\dot \phi(\tau)} d\tau \\
& \leq u_\mathrm{max}^2 T \int_t^{t+T}  \norm{u ^\mathsf{T}(\tau) \phi(\tau)} d\tau.
\end{split}
\ee
The second line in the above inequality follows by the fact that the arc-length between two points in space is always greater than or equal to a strait line between them. The third line in the above inequality follows by substition of the dynamics in \eqref{eq:algebraadaptive}. Substitution of the inequalities in \eqref{eq:puu3}-\eqref{eq:puu5} into \eqref{eq:puu2} it follows that 
\ben
 \int_t^{t+T} \abs{ u^{\mathsf T}(\tau) \phi(\tau)} d\tau \geq \frac{\norm{\phi(t)} \frac{\alpha}{u_\mathrm{max}}}{1+u_\mathrm{max}^2 T}.
\een
Substitution of the above bound into \eqref{eq:puu2} and squaring both sides it follows that 
\ben
V(t+T) \leq \left(1- \frac{2 {\alpha^2}/{u_\mathrm{max}^2 }}{T (1+u_\mathrm{max}^2 T)^2} \right) V(t).
\een
Therefore the dynamics in \eqref{eq:algebraadaptive} are UASL and by linearity this implies ESL as well.
\end{proof}

While the first proof is more generic, the method deployed in the second proof gives direct insight as to how the degree of persistent excitation, $\alpha$, and the upper bound,  $u_\mathrm{max}$, affect the rate of convergence,
\be\label{eq:rcon}
 r_\text{con} \triangleq \left(1- \frac{2 {\alpha^2}/{u_\mathrm{max}^2 }}{T (1+u_\mathrm{max}^2 T)^2} \right) .
\ee In the method by Anderson the rate of convergence is an existence one given by $ (1-2\alpha_2)$. No closed form expression is given relating $\alpha_2$ to the original measures of PE, $\alpha$ and $\beta$.\footnote{If one carefully follows the steps outlined in \cite{and77} it may be possible to come up with a closed form relation, but it appears to be non-trivial.} It is clear however that for fixed $T$ an increase in $u_\mathrm{max}$ conservatively implies an increase in $\beta$. It is also clear from \eqref{eq:rcon} that an increase in $u_\mathrm{max}$ decreases the convergence rate $ r_\text{con}$. We show below that an increase in $\beta$ implies a decrease in $r_\text{con}$. Recall the Abel-Jacobi-Liouville identity, $\det \Phi_2 (t,t_0) = \expl{-\int_{t_0}^{\mathsf T} \textrm{trace}(u(\tau)u^\mathsf{T}(\tau))\, d\tau}$, and thus as $\beta$ increases, $\det \Phi_2 (t,t_0)$ decreases. Now 
using this fact and the bound in \eqref{eq:proof:algebra1} it follows that as $\beta$ increases $\alpha_2$ decreases. 



Often, adaptive systems generate a dynamic system of the form \eqref{eq:algebraadaptive} where $u(\cdot)$ is a function of the parameter estimate itself. For this purpose, a nonlinear system of the form
\be\label{eq:algebraadaptive2}
\dot {\phi}(t) = - u(t,\phi) u^\mathsf{T}(t,\phi) \phi(t)
\ee
with $\phi_0=\phi(t_0)$ needs to be analyzed. This is addressed in following Theorem\benone{ , where it should be noted that UASL\ does not imply ESL}.  
\begin{thm}
Let $\Omega(r) = \{\phi : \norm {\phi} \leq r\}$. If $u(t,\phi) \in \mathrm{PE}^*(\phi,\Omega(r))$ for all $r$, $u(t)$ is piecewise continuous, and there exists $u_\mathrm{max}(r)>0$ such that $\norm{u(t,\phi)}\leq u_\mathrm{max}$ for all $\phi_0\in\Omega(r)$, then $\phi$ in Equation \eqref{eq:algebraadaptive2} is UASL\bencrossone{ and it {\em does not follow} that \eqref{eq:algebraadaptive2} is ESL }.
\end{thm}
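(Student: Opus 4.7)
\medskip

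\noindent\textbf{Proof proposal.} The plan is to transplant the second proof of Theorem~\ref{thm:penar} onto the nonlinear system \eqref{eq:algebraadaptive2}, using the weak PE hypothesis once a trajectory has been trapped inside a fixed sublevel set of the natural Lyapunov function. Let $V(\phi)=\tfrac12\phi^{\mathsf T}\phi$. Differentiating along \eqref{eq:algebraadaptive2} gives $\dot V = -\abs{u^{\mathsf T}(t,\phi)\phi}^{2}\le 0$, so $\norm{\phi(t)}\le\norm{\phi_0}$ for all $t\ge t_0$. This immediately yields uniform stability (take $\delta=\epsilon$) and uniform boundedness (take $B(r)=r$), and, crucially, confines every trajectory starting in $\Omega(r)$ to $\Omega(r)$ for all future time.

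The second step is to fix an arbitrary $r>0$ and an initial condition $\phi_0\in\Omega(r)$. By the hypothesis $u(t,\phi)\in\mathrm{PE}^{*}(\phi,\Omega(r))$ there exist $T(r)>0$ and $\alpha(r)>0$ with
\[
\int_t^{t+T(r)} u(\tau,\phi)u^{\mathsf T}(\tau,\phi)\,d\tau \succeq \alpha(r)\, I
\]
along the trajectory emanating from $\phi_0$, and by hypothesis $\norm{u(t,\phi(t))}\le u_{\max}(r)$ along that same trajectory. This provides precisely the two ingredients used in the Narendra--Annaswamy argument (the lower bound~\eqref{eq:puu1} and the pointwise upper bound feeding \eqref{eq:puu5}), so that estimate goes through verbatim and delivers
\[
V(t+T(r)) \le c(r)\, V(t), \qquad c(r)\triangleq 1-\frac{2\alpha(r)^{2}/u_{\max}(r)^{2}}{T(r)\bigl(1+u_{\max}(r)^{2}T(r)\bigr)^{2}}\in[0,1).
\]
Iterating, $V(t_0+kT(r))\le c(r)^{k}V(t_0)$, which yields a uniform attraction time $T(\eta,r)=T(r)\lceil \log(r^{2}/(2\eta^{2}))/\log(1/c(r))\rceil$ depending only on $\eta$ and $r$, not on $t_0$ or the particular $\phi_0\in\Omega(r)$. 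Since $r>0$ was arbitrary, this is uniform attraction in the large, and combined with uniform stability gives UASL.

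The only delicate point is Step~2: making sure the weak-PE estimate, which is stated for an a priori choice of the compact set, can actually be invoked along the closed-loop trajectory. The monotonicity $V\!\downarrow$ established in Step~1 is what resolves this, because it certifies that the trajectory never leaves the set on which weak PE is postulated. I regard that trapping argument as the main conceptual step; once it is in place the remainder is a direct reuse of the existing Lyapunov estimate. I would also flag that the rate $c(r)$ and window $T(r)$ genuinely depend on $r$, so the attraction time is not of the form $T(\eta/\rho)$ demanded by Remark~\ref{rem:uasl}; this is exactly what prevents the argument from strengthening UASL to ESL and motivates the counterexamples pursued later in the paper.
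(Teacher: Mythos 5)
Your proof is correct and follows essentially the same route as the paper's: reuse the Narendra--Annaswamy contraction estimate from Theorem~\ref{thm:penar} with $r$-dependent constants $\alpha(r)$, $T(r)$, $u_{\mathrm{max}}(r)$ to obtain $V(t+T(r))\le c(r)V(t)$ on each $\Omega(r)$, then let $r$ be arbitrary to conclude UASL while noting that $c(r)$ is not bounded away from $1$ uniformly in $r$. Your explicit trapping observation (that $\dot V\le 0$ confines the trajectory to $\Omega(r)$, legitimizing the use of the weak-PE and boundedness hypotheses along the closed-loop solution) and the explicit attraction-time formula are details the paper leaves implicit, but the argument is the same.
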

\begin{proof} Given that $u(t,\phi) \in \textrm{PE}^*(\phi,\Omega(r))$, it follows that there exists $T(r)$ and $\alpha(r)$ such that $\int_{t}^{t+T} \abs{u^\T(\tau,\phi) w}^2 d\tau  \succeq  \alpha$ for all $\phi_0\in\Omega(r)$.  Choosing a Lyapunov candidate as  $V(\phi,t)=\frac{1}{2}\phi^{\mathsf T}(t)\phi(t)$ and following the same steps as in the proof of Theorem \ref{thm:penar} it follows that $V(t+T(r))\leq r_{\text{con}}V(t)$ for all $\phi_0\in\Omega(r)$ where 
\ben
r_\text{con}(r)= \left(1- \frac{2 {\alpha^2(r)}/{u_\mathrm{max}^2(r) }}{T(r) (1+u_\mathrm{max}^2(r) T(r))^2} \right).
\een
Given that the convergence rate is upper bounded for all $\norm{\phi_0}\leq r$ and $r$ can be arbitrarily large, the dynamics in \eqref{eq:algebraadaptive2} are UASL. In order for one to conclude that the dynamics are ESL there would need to exist a constant $0<\delta<1$ such that $r_\text{con}\leq \delta$ for all $r$. That is, the convergence rate of the Lyapunov function would need to be bounded away from 1 uniformly in initial conditions. This global uniformity is not achievable with this analysis and thus it is not possible to conclude ESL.
\end{proof}

\noindent In the next section we present an application of adaptive control where systems of the form \eqref{eq:algebraadaptive2} occur.


\subsection{Model Reference Adaptive Control}\label{sec:orm}
Let $u:\mathcal [t_0,\infty)  \to \Re$ be the input and $x:[t_0,\infty) \to \Re^n$ the state of a dynamical system
\be
\dot x(t) = A x(t) - B\theta^\T x(t) + Bu(t) \label{eq:xmain}
\ee
where $A\in\Re^{n\times n}$ is known and Hurwtiz and $B\in\Re^{n}$ is known as well, with the parameter $\theta\in\Re^n$ unknown. The goal is to design the input so that $x$ follows a reference model state $x_m:[t_0,\infty) \to \Re^n$ defined by the linear system of equations
\ben
\dot x_m(t) = A x_m(t)+ Br(t)
\een
where $r:[t_0,\infty) \to \Re$ is the reference command. Defining the model following error as $e=x-x_m$ the control input $u(t) =\hat\theta^\mathsf{T}(t)x(t) + r(t)$ achieves this goal when the adaptive parameter $\hat\theta: [t_0,\infty) \to \Re^n$ is updated as follows
\ben
\dot{\hat{\theta}}(t)= -xe^\mathsf{T}PB
\een
where $P=P^\T\in\Re^{n\times n}$ is the positive definite solution to the Lyapunov equation $A^\mathsf{T} P+PA = -Q$ for any real $n\times n$ dimensional $Q=Q^\T \succ0$. So as to simplify the notation we let $C \triangleq PB$ and the adaptive system can be compactly represented as
\be\label{eq:zdyn1}
\bb \dot e(t) \\ \dot \phi(t) 
\eb = \bb A  &  B x^\T(t)\\
               -x(t) C^\T & 0 
               \eb \bb e(t) \\ \phi(t)  \eb
\ee
where the initial conditions of the model following error and parameter error are denoted as $e_0=e(t_0)$ and $\phi_0=\phi(t_0)$. For the dynamics of interest it follows that \be\label{eq:lappy3} V(e,\phi)=e^\T Pe+\phi^\T\phi\ee is a Lyapunov candidate with time derivative along the state trajectories satisfying the inequality, $\dot V \leq - e^\T Q e$. This implies that $e(t)$ and $\phi(t)$ are bounded for all time with 
\be\label{eq:e}\norm{e} \leq \sqrt{\tfrac{V(e_0,\phi_0)}{P_\mathrm{min}}} \text{ and } \norm{\phi} \leq \sqrt{V(e_0,\phi_0)}\ee where $P_\mathrm{min}$ is the minimum eigenvalue of $P$. The reference command is bounded by design and thus $x_m$ is bounded and along with the bounds above implies that $x$ is bounded. The boundedness of $x$ and $\phi$ in turn implies that  $\dot e$ is bounded for all time.
Integration of $\dot V$ shows that $e\in\mathcal L_2$ with 
\be\label{eq:el2}
\norm{e}_{\mathcal L_{2}} \leq \sqrt{\tfrac{V(e_0,\phi_0)}{Q_\mathrm{min}}}
\ee
where $Q_\mathrm{min}$ is the minimum eigenvalue of $Q$. From the fact that $e\in \mathcal L_2\cap \mathcal L_\infty$ and ${\dot e} \in \mathcal L_\infty$ it follows that $e\to0$ as $t\to\infty$ \cite[Lemma 2.12]{annbook}. Before discussing the asymptotic stability of the dynamics in \eqref{eq:zdyn1} the following lemma is critical in relating persistent excitation between the reference model state and the plant state. Let $z=[e^\T,\,\phi^\T]^\T$, then the dynamics in \eqref{eq:zdyn1} can be compactly expressed as
\be\label{eq:zdyn2}
\dot z(t) = \bb A  &  B x^\T(t,z;t_0)\\
               -x(t,z;t_0) C^\T & 0 
               \eb z(t)
\ee
where we have explicitly denoted $x$ as a function of the state variable $z$.

\begin{lem}\label{lem:main}
For the dynamics in \eqref{eq:zdyn2} if $x_m(t)$ is PE with an $\alpha$ and $T$ such that $\int_t^{t+T} x_m(\tau) x_m^\mathsf{T} (\tau) d\tau  \succeq  \alpha I$,  and there exists a $\beta$ such that $\norm{x_m(t)}\leq \beta$,  then $x(t,z)$ is $\mathrm{PE}^*(z,Z(\zeta))$ with $Z(\zeta) = \{z: V(z)\leq \zeta\}$ for all $\zeta>0$  with the following bounds holding  
\be\label{eq:pxm}
\int_t^{t+pT} x(\tau) x^\mathsf{T} (\tau) d\tau  \succeq  \alpha^\prime I \ee
with $p>p_\mathrm{min}$ where
\be\label{eq:pmin}
\sqrt{p_\mathrm{min}} \triangleq \frac{\left(\sqrt{\tfrac{\zeta}{P_\mathrm{min}}}+2 \beta\right) \sqrt{T  \tfrac{\zeta}{Q_\mathrm{min}}}}{\alpha}
\ee
and 
\be
\alpha^\prime \triangleq p \alpha - \left(\sqrt{\tfrac{\zeta}{P_\mathrm{min}}}+2 \beta\right) \sqrt{p T  \tfrac{\zeta}{Q_\mathrm{min}}}.\ee
\end{lem}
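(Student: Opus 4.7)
The plan is to reduce the matrix PE inequality to a scalar statement for each unit vector $w\in\Re^n$ and then exploit the decomposition $x = x_m + e$ together with the $\mathcal L_\infty$ and $\mathcal L_2$ bounds \eqref{eq:e}--\eqref{eq:el2} derived earlier for the MRAC system. Because $\dot V\leq 0$, the set $Z(\zeta)=\{z:V(z)\leq\zeta\}$ is forward invariant, so for every $z_0\in Z(\zeta)$ the bounds on $\|e\|$ and $\|e\|_{\mathcal L_2}$ hold uniformly in time with $\zeta$ in place of $V(e_0,\phi_0)$. This uniformity over $Z(\zeta)$ is precisely what the weak PE definition demands.

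First, I would note that $\int_t^{t+pT} x\,x^\T d\tau \succeq \alpha^\prime I$ is equivalent to
\[ \int_t^{t+pT}(w^\T x(\tau))^2\, d\tau \geq \alpha^\prime \qquad \text{for every unit vector } w\in\Re^n, \]
so the task reduces to lower bounding this scalar integral. Second, I would use the difference-of-squares identity
\[ (w^\T x)^2 - (w^\T x_m)^2 = (w^\T e)\,w^\T(x+x_m), \]
which isolates a single factor of $w^\T e$ and leaves a factor $w^\T(x+x_m)$ that is pointwise bounded. Integrating, the first term is handled by the PE of $x_m$: taking $p$ to be a positive integer and partitioning $[t,t+pT]$ into $p$ adjacent subintervals of length $T$ yields $\int_t^{t+pT}(w^\T x_m)^2 d\tau \geq p\alpha$ (the non-integer case is absorbed into $\lfloor p\rfloor$).

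Third, I would control the cross term using
\[ \Bigl|\int_t^{t+pT}(w^\T e)\,w^\T(x+x_m)\, d\tau\Bigr| \leq \sup_\tau \|x(\tau)+x_m(\tau)\|\,\int_t^{t+pT}|w^\T e(\tau)|\, d\tau. \]
The pointwise bound $\|x_m\|\leq\beta$ combined with \eqref{eq:e} gives $\|x+x_m\|\leq 2\beta+\sqrt{\zeta/P_\mathrm{min}}$, while Cauchy--Schwarz on $[t,t+pT]$ together with \eqref{eq:el2} gives $\int|w^\T e|\,d\tau \leq \sqrt{pT}\,\|e\|_{\mathcal L_2}\leq \sqrt{pT\,\zeta/Q_\mathrm{min}}$. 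Combining these yields
\[ \int_t^{t+pT}(w^\T x)^2 d\tau \;\geq\; p\alpha \;-\; \bigl(\sqrt{\zeta/P_\mathrm{min}}+2\beta\bigr)\sqrt{pT\,\zeta/Q_\mathrm{min}} \;=\; \alpha^\prime, \]
and requiring $\alpha^\prime>0$ rearranges exactly into $p>p_\mathrm{min}$ with $\sqrt{p_\mathrm{min}}$ as stated.

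The main obstacle I anticipate is finding the correct algebraic splitting. A naive bound $(w^\T x)^2 \geq (w^\T x_m)^2 - 2|w^\T x_m||w^\T e|$ followed by Cauchy--Schwarz produces a coefficient proportional to $\beta$ alone and loses the $\sqrt{\zeta/P_\mathrm{min}}$ contribution. The difference-of-squares grouping above is what matches the stated $\alpha^\prime$: it uses the $\mathcal L_\infty$ bound on the combined quantity $x+x_m$ (which is what introduces the $2\beta + \sqrt{\zeta/P_\mathrm{min}}$ factor) against a single Cauchy--Schwarz bound on $w^\T e$ via \eqref{eq:el2}. This approach also makes transparent why only $\mathrm{PE}^*$ can be obtained: $\alpha^\prime$ depends monotonically on $\zeta$ and degrades as the sublevel set grows, so no uniform-in-$z_0$ lower bound on the excitation of $x$ is available from PE of $x_m$ alone.
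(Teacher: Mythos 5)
Your proposal is correct and follows essentially the same route as the paper's own proof: the same difference-of-squares identity $(w^\T x)^2-(w^\T x_m)^2=(w^\T e)\,w^\T(x+x_m)$, the same $\mathcal L_\infty$ bound giving the factor $\sqrt{\zeta/P_\mathrm{min}}+2\beta$, and the same Cauchy--Schwarz plus $\mathcal L_2$ bound on $e$ to obtain $\alpha^\prime$. The only cosmetic difference is that you apply Cauchy--Schwarz to $\int|w^\T e|$ while the paper uses $\int\norm{e}$, which yields the identical estimate.
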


Before going to the proof of this lemma a few comments are in order. First, note that the state variable $z$ contains both the model following error $e$ and the parameter error $\phi$. Therefore, what is being said is that there is weak persistent excitation of $x$ for all initial conditions $e_0$ and $\phi_0$ in the compact regions defined by the level sets of the Lyapunov function $V(z)=e^{\mathsf T}Pe + \phi^{\mathsf T}\phi $. Furthermore, because these conditions hold for arbitrarily large level sets, i.e. $\zeta$ can be arbitrarily large, weak persistent excitation of $x$ is achieved for  any initial condition $z_0\in\Re^{2n}$. However, because the parameters in the  persistent excitation bound in \eqref{eq:pxm}, namely $p$, are not uniform in $z_0$ it can not be concluded that $x$ is PE.
\begin{proof} This proof follows closely that of \cite[Theorem 3.1]{boy83}. For any fixed unitary vector $w$, consider the following equality, $ (x_m^\T w)^2 - (x^\T w)^2 = - (x^\T w - x_m^\T w)(x^\T w+x_m^\T w)$. Using the definition of $e$, the bound in \eqref{eq:e} for $e$ and the bound $\beta$ in the statement of the lemma, it follows that
\ben
 (x_m^\T w)^2 - (x^\T w)^2  \leq \norm{e} \left(\sqrt{\tfrac{V(z_0)}{P_\mathrm{min}}}+2 \beta\right).
\een
Moving $ (x_m^\T w)^2$ to the righthand side, multiplying by $-1$ and integrating from $t$ to $t+pT$ where $p$ is defined just above \eqref{eq:pmin}
\begin{equation*}
\int_t^{t+pT}(x^\T(\tau)  w)^2 d\tau \geq \int_t^{t+pT}  (x_m^\T(\tau)  w)^2d\tau - 
\left(\sqrt{\tfrac{V(z_0)}{P_\mathrm{min}}}+2 \beta\right)\int_t^{t+pT}\norm{e(\tau)} d\tau.
\end{equation*}
Applying Cauchy-Schwartz to the integral on the right hand side and using the fact that $\int_t^{t+T} (x_m^\T(\tau)  w)^2 d\tau \geq \alpha$ we have that
\begin{equation*}
\int_t^{t+pT}(x^\T(\tau)  w)^2 d\tau \geq p \alpha  - 
\left(\sqrt{\tfrac{V(z_0)}{P_\mathrm{min}}}+2 \beta\right) \sqrt{pT  \int_t^{t+pT}\norm{e(\tau)}^2 d\tau }.
\end{equation*}
Applying the bound in \eqref{eq:el2} for the $\mathcal L_2$ norm of $e$, it follows that
\ben
\int_t^{t+pT}(x^\T(\tau)  w)^2 d\tau \geq p \alpha- 
\left(\sqrt{\tfrac{V(z_0)}{P_\mathrm{min}}}+2 \beta\right) \sqrt{pT  \tfrac{V(z_0)}{Q_\mathrm{min}}}.
\een
For all $z_0\in Z(\zeta)$ it follows that $V(z_0)\leq \zeta$ and therefore 
\ben p \alpha - \left(\sqrt{\tfrac{V(z_0)}{P_\mathrm{min}}}+2 \beta\right) \sqrt{pT  \tfrac{V(z_0)}{Q_\mathrm{min}}}\geq\alpha^\prime.\een
It follows directly that $\int_t^{t+pT}(x^\T(\tau)  w)^2 d\tau \geq \alpha^\prime$ for all $t\geq t_0$ and $z_0\in Z(\zeta)$.
\end{proof}
\begin{rem} \label{rem:1}
The main take away from this lemma is that for a given $\alpha$ and $T$ such that $\int_t^{t+T} x_m(\tau) x_m^\mathsf{T} (\tau) d\tau  \succeq \alpha I$ and for a fixed $\alpha^\prime$ such that $\int_t^{t+pT} x(\tau) x^\mathsf{T} (\tau) d\tau  \succeq \alpha^\prime I$, as the size of the level set $V(z)=\zeta$ is increased, $p$ must also increase. This can be seen directly through \eqref{eq:pmin} where  $p_\mathrm{min}$ increases with increasing $\zeta$. Thus, as $p$ increases, the time window $pT$ over which the excitation is measured  increases as well.
\end{rem}

\begin{thm}\label{thm:ormpe}
 If $r(t)$ is piecewise continuous and bounded, and $x_m(t)$ is {PE} and uniformly bounded, then the the equilibrium of the dynamics in \eqref{eq:zdyn2} is UASL\bencrossone{  and it {\em does not follow} that it is ESL }.
\end{thm}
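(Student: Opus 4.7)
The plan is to combine Lemma \ref{lem:main} with the Lyapunov function $V(e,\phi) = e^\T P e + \phi^\T \phi$ to upgrade the weak PE property of the regressor on each Lyapunov level set into a level-set-by-level-set exponential decay of $V$, and then to assemble these local exponential decays into the three UASL ingredients: uniform stability, uniform boundedness, and uniform attraction in the large. The entire argument is essentially a parameterized version of the second proof of Theorem \ref{thm:penar}, with the parameter being the size of the level set $Z(\zeta) = \{z : V(z) \leq \zeta\}$.

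First, I would extract uniform stability and uniform boundedness directly from the inequality $\dot V \leq -e^\T Q e \leq 0$ that was derived just above \eqref{eq:e}. Since $V$ is positive definite, radially unbounded, and non-increasing along trajectories, every level set $Z(\zeta)$ is positively invariant, which gives uniform boundedness with $B(r)$ determined by the smallest $\zeta$ such that $\{\norm{z} \leq r\} \subset Z(\zeta)$, and uniform stability with $\delta(\epsilon)$ determined in the same way. Note that this step uses only the Lyapunov inequality and requires no excitation.

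Next, I would fix $\zeta > 0$ and any $z_0 \in Z(\zeta)$. By Lemma \ref{lem:main} applied at this $\zeta$, there exist $p(\zeta)$, $T$, and $\alpha'(\zeta)$ such that $\int_t^{t+pT} x(\tau) x^\T(\tau) d\tau \succeq \alpha' I$ uniformly in $t \geq t_0$ and uniformly in $z_0 \in Z(\zeta)$. I would then mirror the computation in the second proof of Theorem \ref{thm:penar}: integrate $\dot V = -e^\T Q e$ over the window $[t, t+pT]$, use Cauchy-Schwartz and a reverse triangle inequality on $|x^\T \phi|$ (frozen versus time-varying $\phi$) to relate the integral of $e^\T Q e$ back to $\norm{\phi(t)}^2$ via the weak PE bound, and control the difference $\phi(t)-\phi(\tau)$ using $\dot\phi = -x C^\T e$ together with the already-established bounds on $x$ and the $\mathcal L_2$ bound on $e$ in \eqref{eq:el2}. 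The output should be an inequality of the form $V(t + p(\zeta) T) \leq r_{\text{con}}(\zeta) V(t)$ with $r_{\text{con}}(\zeta) < 1$, valid uniformly for $z_0 \in Z(\zeta)$ and $t \geq t_0$. This step — closing the coupling between the directly dissipated $e$ and the indirectly dissipated $\phi$ — is the main obstacle, since the estimate has to be uniform across $Z(\zeta)$ but only within $Z(\zeta)$; a slightly cleaner alternative is to invoke the Anderson-style uniform-complete-observability route from the first proof of Theorem \ref{thm:penar}, applied to the fixed-trajectory LTV system obtained by treating $x(\cdot, z_0; t_0)$ as an external time function.

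Finally, I would convert the per-level-set geometric decay into uniform attraction in the large. For any prescribed $\rho, \eta > 0$, choose $\zeta$ large enough that $\{\norm{z} \leq \rho\} \subset Z(\zeta)$; on $Z(\zeta)$ the iterated decay $V(t+k p(\zeta) T) \leq r_{\text{con}}(\zeta)^k V(t)$ gives an attraction time $T(\rho,\eta)$ depending only on $\rho$ and $\eta$ (and the fixed system data). Combined with the uniform stability and uniform boundedness from the first step, this is exactly Definition \ref{def:stable}(ix), so UASL follows. As a coda I would observe, via Remark \ref{rem:1}, that $p(\zeta) \to \infty$ as $\zeta \to \infty$, so $r_{\text{con}}(\zeta) \to 1$ and the rate cannot be taken uniform in $\zeta$; this is precisely why the same argument does not upgrade to ESL, consistent with Remark \ref{rem:uasl}.
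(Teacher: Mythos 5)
Your overall architecture matches the paper's: apply Lemma \ref{lem:main} to get $x(t,z)\in\mathrm{PE}^*(z,Z(\zeta))$ on each level set $Z(\zeta)$, establish a uniform rate of decay on each fixed $Z(\zeta)$, and then let $\zeta$ be arbitrary to pass from UAS to UASL (with uniform stability and uniform boundedness coming for free from $\dot V\leq -e^\T Q e\leq 0$). The non-uniformity of $p(\zeta)$ as the reason ESL cannot be concluded is also exactly the paper's closing observation. Where you diverge is the middle step: the paper does not redo a Lyapunov contraction estimate at all. It verifies that $x\in\mathcal P_{[t_0,\infty)}$ (piecewise smooth, Definition \ref{def:P}) using the boundedness of all signals and the piecewise continuity of $r$, and then invokes Morgan and Narendra's Theorem 5 from \cite{mor77_1} to conclude UAS on each $Z(\zeta)$.

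The step you propose in place of that citation has a genuine gap, and you half-see it yourself. The computation you sketch --- Cauchy--Schwartz plus a reverse triangle inequality on $\abs{x^\T\phi}$, controlling $\phi(t)-\phi(\tau)$ --- is the second proof of Theorem \ref{thm:penar}, which works because for the gradient system \eqref{eq:algebraadaptive} the Lyapunov decrement is $-\abs{u^\T\phi}^2$, i.e., the regressor acts \emph{directly} on the decaying quantity. For the coupled system \eqref{eq:zdyn2} the decrement is $\dot V=-e^\T Q e$: only $e$ is dissipated, and $\phi$ is driven down only indirectly through the $Bx^\T\phi$ forcing in the $e$-equation. To get $V(t+pT)\leq r_{\mathrm{con}}V(t)$ you must show that if $\int_t^{t+pT}e^\T Qe\,d\tau$ is small while $x$ is persistently exciting, then $\norm{\phi(t)}$ is small --- an observability argument for the full LTV pair, not a rearrangement of the gradient-flow estimate. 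That implication is precisely the content of the Morgan--Narendra theorem (equivalently, of Anderson's output-injection/uniform-complete-observability argument for the combined error system), and it is a substantial result, not a routine adaptation. Your fallback of invoking the Anderson-style UCO route for the frozen-trajectory LTV system is the right idea and is essentially what the literature the paper cites actually does, but as written it is a pointer rather than a proof; either carry that argument out for the pair in \eqref{eq:zdyn2} or cite \cite[Theorem 5]{mor77_1} as the paper does, in which case you also need the small additional check that $x\in\mathcal P_{[t_0,\infty)}$, which your proposal omits.
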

\begin{proof}
Given that $x_m\in\textrm{PE}$ it follows from Lemma \ref{lem:main} that $x(t,z)\in\textrm{PE}^*(z,Z(\zeta))$ for any $\zeta$ where  $Z(\zeta) = \{z: V(z)\leq \zeta\}$ and the Lyapunov function $V$ is defined in \eqref{eq:lappy3}. From  \eqref{eq:e} it follows that all signals are bounded. Furthermore given that $r$ is piecewise continuous and bounded it follows from \eqref{eq:xmain} that $\dot x$ is piecewise continuous. Therefore ${x\in\mathcal P_{[t_0,\infty)}}$, see  Definition \ref{def:P} of {\em piecewise smooth} in the Appendix. With $x(t,z)\in\textrm{PE}^*(z,Z(\zeta))\cap \mathcal P_{[t_0,\infty)}$ for any fixed $\zeta$ applying \cite[Theorem 5]{mor77_1} it follows that the dynamics of interest are UAS. Given that the above results hold for any $\zeta>0$, the dynamics of interest are therefore UASL. Due to the fact that persistent excitation bounds for $x$ do not hold globally uniformly in the initial condition $z_0$ one is not able to conclude ESL from this analysis. 
\end{proof}

We can in fact state something even stronger, and will give a proof by example in the following section (following Theorem \ref{thm:orm}).
\begin{thm}\label{thm:orms}
 The reference command $r(t)$ being  piecewise continuous and bounded, and the reference model state $x_m(t)$ being uniformly bounded and {PE} are not  sufficient  for the equilibrium of the dynamics in \eqref{eq:zdyn2} to be ESL.
 \end{thm}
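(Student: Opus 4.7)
My plan is to prove Theorem \ref{thm:orms} by exhibiting an explicit counterexample — a specific instance of \eqref{eq:zdyn2} that satisfies every hypothesis (and hence, by Theorem \ref{thm:ormpe}, is already UASL) yet violates the ESL bound in Definition \ref{def:exp}. Remark \ref{rem:uasl} supplies the appropriate test: for ESL the attracting time $T=\frac{1}{\nu}\log(\kappa\rho/\eta)$ depends on initial data only through the ratio $\eta/\rho$. It therefore suffices to produce a family $\{z_0^{(k)}\}$ with $\norm{z_0^{(k)}}=k\to\infty$ for which the time needed to contract to, say, $\eta=k/4$ grows without bound in $k$; no single pair $(\kappa,\nu)$ can then make the ESL inequality hold uniformly.

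The mechanism is the ``infinite region of finite velocity'' in state space that is previewed in the introduction. Inspecting the matrix in \eqref{eq:zdyn2}, namely $\bb A & Bx^\T \\ -xC^\T & 0 \eb$, one sees that the off-diagonal coupling blocks vanish whenever the endogenous regressor $x(t,z)=e+x_m$ is zero. I would therefore specialize to the scalar plant $n=1$ with $A<0$, $B=1$, a scalar unknown $\theta$, and a bounded, PE sinusoidal reference $r(t)=\sin(\omega t)$, so that $x_m$ is uniformly bounded and PE; then choose initial data $e_0=-x_m(t_0)$ and $\phi_0=-k$ for $k>0$ large. This places the system on the null-regressor locus $x(t_0)=e_0+x_m(t_0)=0$, so by \eqref{eq:zdyn1} both $\dot e(t_0)=Ae_0=-Ax_m(t_0)$ and $\dot\phi(t_0)=0$ are bounded independently of $k$, while $\norm{z_0^{(k)}}\sim k$.

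The core of the argument is to show that this slow-velocity behavior persists over a time window much longer than the logarithmic window permitted by ESL. With $\phi\approx-k$ the scalar regressor dynamics $\dot x=(A+\phi)x+Br$ is strongly stable with time constant of order $1/k$, so after a brief transient the regressor settles into an oscillation of amplitude $O(1/k)$ around zero. Consequently $e(t)\approx -x_m(t)$ stays of order one and $\abs{\dot\phi(t)}=\abs{x(t)\,C^\T e(t)}=O(1/k)$, so $\phi(t)$ drifts by at most $O((t-t_0)/k)$ and remains below $-k/2$ for $t-t_0$ of order $k^2$. Throughout this window $\norm{z(t)}\geq \abs{\phi(t)}\geq k/2$, and in particular over any window of length $O(\log k)$ the state is nowhere near $k/4$. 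This directly contradicts the ESL prescription that $T(\rho=k,\eta=k/4)$ be bounded in $k$.

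The main obstacle will be closing the self-consistency loop rigorously: the assumption $\phi\approx-k$ is what makes $x$ small, which in turn is what keeps $\phi$ near $-k$. I would close this by a bootstrap/continuation argument on an interval $[t_0,t_0+Lk]$ for a fixed $L>0$, assuming $\phi(t)\leq -k/2$ on a relatively open subinterval, deriving the consequent $O(1/k)$ bound on $\dot\phi$ from the explicit convolution solution of $\dot x=(A+\phi)x+Br$, and using that bound to extend the subinterval. Some additional care is required to handle the fact that $r(t)$ is oscillatory so that $x$ tracks a sinusoid rather than converging to a constant, but the small-gain property $\abs{x}\lesssim 1/k$ is robust under this refinement. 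The explicit construction that the paper defers to Section 4 (see Theorem \ref{thm:orm} and the subsequent discussion) presumably carries out precisely this program, and my plan is to follow that template to complete the proof of Theorem \ref{thm:orms}.
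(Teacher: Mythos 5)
Your proposal is correct and rests on the same core mechanism as the paper's proof: pick initial conditions with the regressor $x=e+x_m$ at (or near) zero and $\phi$ large and negative, so the coupling blocks of \eqref{eq:zdyn2} are small, adaptation stalls, and the time to contract by a fixed ratio $c=\eta/\rho$ grows without bound in $\norm{z_0}$, contradicting ESL via Remark \ref{rem:uasl}. Two execution differences are worth noting. First, the paper takes $r(t)\equiv\bar r\neq 0$ constant and initializes $x_m(t_0)=\bar x=-b\bar r/a$, so that $x_m$ is a nonzero constant --- which is already PE in the scalar case --- and the error system \eqref{eq:orm1e} is autonomous; your sinusoidal $r$ also satisfies the hypotheses of Theorem \ref{thm:orms} but buys nothing and is the source of the non-autonomous bookkeeping you flag in your bootstrap. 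Second, instead of a finite-window continuation argument, the paper constructs an explicit unbounded invariant set $\M_0=\M_1\cup\M_2\cup\M_3$ (Theorem \ref{thm:orm}) on which the velocity $\norm{\dot z}$ is uniformly bounded by a constant $d_z$; invariance converts this into the global lower bound $T\geq\norm{z(t_0)}(1-c)/d_z$ valid for all time, and Corollary \ref{lem:sticko} then sharpens your $\abs{\dot\phi}=O(1/k)$ estimate to $\dot\phi\to 0$ in the trough of $\M_1$. Your window-based argument is nonetheless sufficient for the theorem as stated, since showing $\norm{z(t)}\geq k/2$ on an interval whose length is unbounded in $k$ already defeats any fixed pair $(\kappa,\nu)$; but if you carry the program out, the constant-reference, invariant-set route is preferable, because the self-consistency loop you identify as the main obstacle closes immediately once $\M_0$ is shown to be invariant, with no need to track the transient of $\dot x=(A+B\phi)x+Br$.
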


\section{Lack of Exponential Stability in the Large for Adaptive Systems}



In this section two examples are presented so as to illustrate rigorously by example the implication made in Theorem \ref{thm:ormpe}, i.e. persistent excitation of the reference model does not imply exponential stability \benone{   in the large} of the adaptive system and thus proves Theorem \ref{thm:orms}. This is performed by constructing an invariant unbounded region in the state space of the direct adaptive system where the rate \benone{ of change per unit time} of \bencrossone{ convergence} \benone{ the system state }\bencrossone{ to the origin} is finite. It is this feature which implies a lack of exponential stability in the large.

The first example is identical to the dynamics in \eqref{sec:orm}, but with a learning gain added to the update law. The second example is a modified version of classic direct adaptive control with an error feedback term in the reference model \cite{gib13access,gibson_phd}. So as to distinguish the two systems we characterize them by their reference models and refer to the first system as {\em Open-loop Reference Model} (ORM) adaptive control and the second as {\em Closed-loop Reference Model} (CRM) adaptive control. The CRM adaptive system has been added due to recent interest in transient properties of adaptive systems, with the class of CRM systems portraying smoother trajectories as compared to their ORM counterpart, \cite{gib13access,gibson_phd}.

\subsection{Scalar ORM Adaptive Control with PE Reference State}


The following scalar dynamics are nearly identical to those in Section \ref{sec:orm} however we repeat them herein with $A=a<0$ and $B=b>0$ to emphasize that they are scalars. Let $u:\mathcal [t_0,\infty)  \to \Re$ be the input, $x:[t_0,\infty) \to \Re$ the plant state, $x_m:[t_0,\infty) \to \Re$  the reference state and $r:[t_0,\infty) \to \Re$ the reference input to the following set of differential equations
\begin{align}
\dot x(t) &= a x(t) - b\theta x(t) + bu(t) \label{eq:orm1}\\
\dot x_m(t) &= a x_m(t)+ br(t), \label{eq:orm2}
\end{align}
 with the parameter $\theta\in\Re$ unknown. 
For ease of exposition, in both this section and the next, we will assume that $r(t)$ is a non-zero constant, i.e., $r(t) \equiv \overline r$, $\overline r\neq 0$. The control input  is defined as $u(t) =\hat\theta(t)x(t) + r(t)$ with $\hat\theta: [t_0,\infty) \to \Re$ updated as follows
\be\label{eq:orm3}
\dot{\hat{\theta}}(t)= -\gamma xe,
\ee
where $e=x-x_m$ and  $\gamma>0$ is a tuning gain. 

As before, the error dynamics can be compactly expressed in vector form as ${z(t)= [e(t),\thetae(t)]^{\mathsf T}}$. A sufficient condition for the uniform asymptotic stability of the above system is that the reference input remain a non-zero constant for all time. This can be proved using Theorem \ref{thm:ormpe}. Given that for a constant reference command the above dynamics are also autonomous we give the same result using the well known invariance principle from Lasalle and Krasovskii \cite{kra63,1086720,bar52nauk}.

\begin{thm}\label{thm:orm1}
For the system defined in Equations {\rep{eq:orm1}-\rep{eq:orm3}} and $r(t) \equiv \overline r$, $\overline r\neq 0$ $z=0$ is UASL.\end{thm}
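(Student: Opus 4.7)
The plan is to exploit the fact that with $r(t)\equiv \bar r$ the whole closed-loop error system becomes autonomous, and then to combine a standard MRAC Lyapunov inequality with the LaSalle--Krasovskii invariance principle. Writing the dynamics of \eqref{eq:orm1}--\eqref{eq:orm3} in terms of $e = x-x_m$ and $\phi = \hat\theta-\theta$ gives $\dot e = ae + b\phi x$ and $\dot\phi = -\gamma x e$, with $x = e+x_m$ and with $x_m$ evolving by $\dot x_m = ax_m + b\bar r$. Since $\bar r$ is constant, I would view the extended state $(e,\phi,x_m)\in\RR^3$ as evolving under a time-invariant vector field whose equilibrium of interest is $(0,0,\bar x_m)$, where $\bar x_m = -b\bar r/a\ne 0$ is the globally exponentially stable equilibrium of the scalar reference-model subsystem (recall $a<0$).

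Next I would use the standard Lyapunov candidate $V(e,\phi) = \tfrac{1}{2}e^2 + \tfrac{b}{2\gamma}\phi^2$. A direct differentiation along the trajectories cancels the cross terms $b\phi x e$ exactly, yielding $\dot V = ae^2 \le 0$. Because $V$ is positive definite and radially unbounded in $(e,\phi)$, this immediately gives uniform Lyapunov stability of $(e,\phi)=0$ and uniform boundedness of $(e,\phi)$ trajectories (the sublevel sets of $V$ are forward invariant); together with the global exponential stability of the $x_m$ subsystem around $\bar x_m$, every trajectory of the extended autonomous system is uniformly bounded.

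The main step is to promote stability to global asymptotic attraction of the origin in $(e,\phi)$ by identifying the largest invariant set $M$ inside $E = \{\dot V = 0\} = \{e = 0\}$. On any trajectory in $M$, $e(t)\equiv 0$ forces $\dot e(t)\equiv 0$, hence $b\phi(t)\,x(t)\equiv 0$; but on $E$ we have $x=x_m$, and $x_m$ satisfies $\dot x_m = ax_m + b\bar r$ with $b\bar r\ne 0$, so $x_m(t)$ cannot vanish on any interval of positive measure. Thus $\phi(t)\equiv 0$ on $M$, and $M$ reduces to the line $(e,\phi) = (0,0)$ (with $x_m$ relaxing toward $\bar x_m$). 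LaSalle--Krasovskii then yields $(e(t),\phi(t))\to(0,0)$ for every initial condition.

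The main obstacle is the small gap between the pointwise convergence produced by LaSalle and the uniformity in both $t_0$ and initial condition demanded by Definition~\ref{def:stable}(ix). I would close it by observing that the extended dynamics are autonomous, so stability and attraction times are automatically translation invariant in $t_0$; combined with the radial unboundedness of $V$ in $(e,\phi)$, the exponential convergence of $x_m$ to $\bar x_m$, and a standard compactness argument on sublevel sets, the attraction time can be taken to depend only on the sublevel-set radius $\rho$ and target accuracy $\eta$. An alternative shortcut is to apply Theorem~\ref{thm:ormpe} directly: a nonzero constant $\bar r$ drives the scalar $x_m$ to a nonzero constant exponentially, so after a finite transient $x_m$ is bounded away from zero, which in the scalar case is precisely the PE condition required by Theorem~\ref{thm:ormpe}, yielding UASL immediately.
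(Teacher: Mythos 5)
Your proof is correct and follows essentially the same route as the paper: a quadratic Lyapunov function with $\dot V \propto ae^2 \le 0$ combined with the LaSalle--Krasovskii invariance principle for the autonomous (constant-$\bar r$) system, which is exactly the paper's argument; you simply fill in the invariant-set computation and the uniformity step that the paper leaves implicit. One minor point in your favor: your weighting $\tfrac{b}{2\gamma}\phi^2$ makes the cross-term cancellation exact for general $b>0$, whereas the paper's $V=e^2+\tfrac{1}{\gamma}\phi^2$ in \eqref{eq:lyaporm} yields $\dot V = 2ae^2$ only when $b=1$.
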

\begin{proof}
Define the Lyapunov function
\be\label{eq:lyaporm}
V(e,\thetae) = e^2 + \frac{1}{\gamma}\thetae^2
\ee
then
$
\dot V(e,\thetae) = 2a e^2
$.
Since $V > 0$ for all $z\neq 0$, $\dot V \leq 0$ for all $z \in \RR^2$ and $V \to \infty \; as \;z \to \infty$ the equilibrium at the origin is uniformly stable and uniformly bounded.  Given that the system is autonomous, it follows from the the invariance principle that the origin is UASL. 
\end{proof}


We are now going to construct an unbounded invariant region as discussed at the beginning of this section. The reference model state initial condition is chosen as $x_m(t_0) = \bar x$ where 
\be\label{eq:orm1xmo} {\bar x}\triangleq  \frac{-b {\bar r}}{a}>0\ee 

so that $x_m(t)=\xm$ for all time.  
Then the error dynamics are completely described by the second order dynamics
\be\label{eq:orm1e}
\dot z(t; x_0,a, b, \gamma, {\bar x}) =  \bb \dot e(t) \\ \dot \phi(t)  \eb=\bb a e(t) + b \thetae(t) (e(t)+\xm)  \\ - \gamma e(t) (e(t)+\xm) \eb
\ee
with $s(t; t_0, z(t_0))$ the transition function for the dynamics above.

The invariant set is constructed by first defining three 1-dimensional manifolds $ \Su_1, \Su_2, \Su_3$, three preliminary subsets of $\Re^2$ which we will denote $\PP_1,\PP_2,\PP_3$, and  finally three regions $\M_1,\M_2,\M_3$ are defined whose union is our invariant set of interest. Use Figure \ref{fig:e_theta_o} to help visualize these regions. We begin by defining the surface
\be\label{s11}
\Su_1 \triangleq \lb [e,\thetae]^{\mathsf T} \left|\  e = -\xm  \right. \rb. 
\ee
The region $\PP_1 \subset \RR^2$ and the second surface $\Su_2$ are defined as
\begin{align} 
\PP_1 & \triangleq \lb [e,\thetae]^{\mathsf T} \left|\ \thetae < \frac{a}{b} \right. \rb\notag \\ 
\Su_2 & \triangleq \lb [e,\thetae]^{\mathsf T} \left|\ e = \frac{(a - b \thetae)\xm}{a + b \thetae}, [e,\thetae]^{\mathsf T} \in \PP_1 \right. \rb . \label{s12}
\end{align}
Similarly a second subset of the  error-space $\PP_2 \subset \RR^2$ and a third surface $\Su_3$ are defined as
\ben\begin{split} \PP_2 &   \triangleq\lb [e,\thetae]^{\mathsf T} \left|\ \frac{a}{b} \leq \thetae < 0 \right. \rb \\ 
\Su_3 & \triangleq \lb [e,\thetae]^{\mathsf T} \left|\ e = 0, [e,\thetae]^{\mathsf T} \in \PP_2 \right. \rb .\end{split}\een
%
We now define regions $\M_1$ and $\M_2$ as
\begin{align}
\label{m11}\M_1 & \triangleq \lb [e,\thetae]^{\mathsf T} \left|\ -\xm < e < \frac{(a - b \thetae){\bar x}}{a + b \thetae}, [e,\thetae]^{\mathsf T} \in \PP_1 \right. \rb  \\  
\label{m12}\M_2 &  \triangleq \lb [e,\thetae]^{\mathsf T} \left|\ -\xm < e < 0, [e,\thetae]^{\mathsf T} \in \PP_2 \right. \rb. 
\end{align}
%
From these definitions, we note that the surfaces $\Su_1$ and $\Su_2$ form the two sides of the region $\M_1$. Similarly, $\Su_1$ and $\Su_3$ form the sides of the region $\M_2$. In order to complete the invariant set a third region is defined using the Lyapunov function in \eqref{eq:lyaporm} which gives us the convex bounded region
%
%
\be \label{m13}\M_3  \triangleq \lb [e,\thetae]^{\mathsf T} \left|\ e^2 + \frac{1}{\gamma} \thetae^2 <  \xm^2 \right. \rb .\ee
The union of  the three regions is defined as
\be\label{eq:orm1m} \M_0 \triangleq \M_1 \cup \M_2 \cup \M_3. \ee

The following theorem will show three facts. First, the error velocities within $\M_0$ are finite and bounded even though $\M_0$ is unbounded. Second, $\M_0$ is an invariant set. Lastly, a lower limit on the time of convergence is given as a function of the initial condition $z(t_0)$ and the ratio $ \frac{\norm{s(t_1; t_0, z(t_0))}}{\norm{s(t_0; t_0, z(t_0))}}$ for some $t_1\geq t_0$. The conclusion to be arrived at is that the system is UASL and not ESL.


\begin{figure}[t!]
\centering  \label{e_theta}
\psfrag{a}[ccl][ccl][1]{$-\bar x$}
\psfrag{b}[ccl][ccl][1]{$\phi$}
\psfrag{c}[ccl][ccl][1]{$e$}
\psfrag{d}[ccl][ccl][1]{$\M_3$}
\psfrag{e}[ccl][ccl][1]{$\Su_3$}
\psfrag{f}[ccl][ccl][1]{$\M_2$}
\psfrag{g}[ccl][ccl][1]{$\M_1$}
\psfrag{h}[ccl][ccl][1]{$\Su_2$}
\psfrag{i}[ccl][ccl][1]{$\Su_1$}

\includegraphics[width=3in]{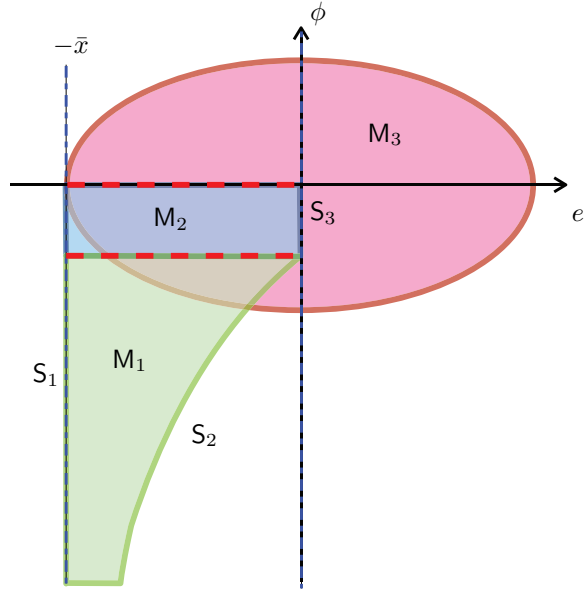}
\caption{The three regions $\M_1$ (green), $\M_2$ (blue) and $\M_3$ (red) whose union results in the invariant set $\M_0$. }\label{fig:e_theta_o}
\end{figure}


\begin{thm}\label{thm:orm} For the error dynamics $z(t)$ with $r(t)={{\bar r}}$ and $x_m(t_0)={\xm}$ with $\M_0$ defined in \eqref{eq:orm1m} and $s(t; t_0, z(t_0))$ the transition function of the differential equation \rep{eq:orm1e}, the following hold
\begin{enumerate}[label={\normalfont (\roman*)}]
\item $\norm{\dot z} \leq d_z$ for all $ z \in \M_0$ where \ben d_z \triangleq \sqrt{(|a {\bar x}| + 2 |b \sqrt{\gamma} {\bar x}^2|)^2 + (2 \gamma {\bar x}^2)^2} \een
\item $\M_0$ is an invariant set.
\item A trajectory beginning at ${z(t_0)\in \M_0}$ will converge to a fraction of its original magnitude at time $t_1$, with
\be T \geq \frac{\norm{z(t_0)}(1-c)}{d_z} \label{eq:Tbound} \ee
where 
$c = \frac{\norm{s(t_1; t_0, z(t_0))}}{\norm{s(t_0; t_0, z(t_0))}}$ and $T = t_1 - t_0$.
\end{enumerate}
\end{thm}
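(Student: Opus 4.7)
The plan is to verify the three claims sequentially, exploiting that the surfaces $\Su_1,\Su_2,\Su_3$ and the Lyapunov level curve defining $\M_0$ were chosen precisely so that the vector field either points into $\M_0$ or has a known finite magnitude on each piece.

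For part (i), I would bound $\dot e$ and $\dot\phi$ region by region and then take the envelope. The key algebraic identity is that on $\Su_2$, substituting $e=(a-b\phi)\xm/(a+b\phi)$ into $\dot e=ae+b\phi(e+\xm)$ telescopes to the clean expression $\dot e = a\xm$; combined with $\dot e|_{\Su_1}=-a\xm$ and the observation that $\dot e$ is affine in $e$ with slope $a+b\phi<0$ on $\PP_1$, this gives $\abs{\dot e}\leq\abs{a\xm}$ uniformly on $\M_1$, even though $\phi$ ranges to $-\infty$ there. On $\M_2$ the same affine-in-$e$ reasoning together with $b\abs{\phi}\leq\abs{a}$ again yields $\abs{\dot e}\leq\abs{a\xm}$. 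On $\M_3$ the level-set constraints $\abs{e}<\xm$, $\abs{\phi}<\sqrt\gamma\,\xm$, $\abs{e+\xm}<2\xm$ directly give $\abs{\dot e}\leq\abs{a\xm}+2\abs{b\sqrt\gamma\,\xm^2}$. For $\dot\phi=-\gamma e(e+\xm)$ the product is bounded by $\gamma\xm^2$ on $\M_1\cup\M_2$ and by $2\gamma\xm^2$ on $\M_3$. Combining the worst-case componentwise bounds in the Euclidean norm yields $\norm{\dot z}\leq d_z$.

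For part (ii), I would verify that the vector field points into $\M_0$ on every piece of $\partial\M_0$. That boundary is carved by $\Su_1$, $\Su_2$, $\Su_3$, and the Lyapunov level curve $\{V=\xm^2\}$, minus the interior overlaps where one piece is subsumed inside a larger one. The componentwise velocities from part (i) already settle the straight pieces: on $\Su_1$, $\dot e=-a\xm>0$ pushes into $\M_0$; on $\Su_2$, $\dot e=a\xm<0$ pushes into $\M_1$; on $\Su_3$, $\dot e=b\phi\xm<0$ pushes into $\M_2$. On the Lyapunov piece, $\dot V=2ae^2\leq 0$ keeps trajectories inside $\{V\leq\xm^2\}\subset\M_0$. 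What remains is to verify compatibility at the junctions ($\phi=a/b$ where $\Su_2$ meets $\Su_3$, and the tangential contact points of the Lyapunov curve with $\Su_1$ and $\Su_3$) so that no trajectory escapes through a corner.

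For part (iii), invariance from part (ii) guarantees that the whole trajectory $s(\tau;t_0,z(t_0))$ for $\tau\in[t_0,t_1]$ lies in $\M_0$, so by part (i) $\norm{\dot s(\tau)}\leq d_z$ along it. Integration and the reverse triangle inequality then give
\begin{equation*}
(1-c)\norm{z(t_0)}=\norm{s(t_0)}-\norm{s(t_1)}\leq\norm{s(t_1)-s(t_0)}\leq\int_{t_0}^{t_1}\norm{\dot s(\tau)}\,d\tau\leq d_z\,T,
\end{equation*}
which rearranges to \eqref{eq:Tbound}. The main obstacle throughout is the case analysis in (i)--(ii), and within that the single nontrivial fact is the telescoping identity $\dot e|_{\Su_2}=a\xm$: without it, $\M_1$ being unbounded in $\phi$ would force $\abs{\dot e}\to\infty$ and the whole construction --- an unbounded invariant region with finite rate of motion --- would collapse. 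Checking that identity and checking that $\Su_2$ is an inward-pointing boundary piece is the crux; the triangle-inequality step for (iii) is then essentially automatic.
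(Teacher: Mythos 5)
Your proposal is correct and follows essentially the same route as the paper: componentwise velocity bounds on $\M_1,\M_2,\M_3$ for (i), inward-pointing checks on $\Su_1,\Su_2,\Su_3$ plus the Lyapunov level set for (ii), and the distance-over-maximum-speed inequality for (iii). The one spot where your stated justification is thinner than what is needed is $\Su_2$: it is a curved boundary $e=g(\phi)$ with $g(\phi)=\frac{(a-b\phi)\xm}{a+b\phi}$, so $\dot e=a\xm<0$ alone does not rule out crossing --- you must check $\frac{d}{dt}\bigl(e-g(\phi)\bigr)=\dot e-g'(\phi)\dot\phi\le 0$, which does hold because $g'(\phi)=\frac{-2ab\xm}{(a+b\phi)^2}>0$ and $\dot\phi=-\gamma e(e+\xm)>0$ on $\Su_2$ (the paper handles this with the normal vector $\hat n_2$).
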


\begin{proof}[Proof of {\normalfont (i)}] From the definition of $\M_1$ in \eqref{m11} and $\M_2$ in \eqref{m12}, and the definition of $\dot \phi$ in \eqref{eq:orm1e} it follows that $\abs{\dot \thetae(z)} \leq \gamma \frac{{\bar x}^2}{4} $ for all ${z\in \M_1 \cup \M_2}$. Similarly, from the definition of $\M_3$  in \eqref{m13} it follows that $\abs{\dot \thetae(z)} \leq 2 \gamma  {\bar x}^2 $ for all ${z\in \M_3}$. Therefore\be \label{phib1}\abs{\dot \thetae(z)} \leq 2 \gamma  {\bar x}^2\ee
for all ${z \in \M_0}$ where $\M_0$ is defined in \eqref{eq:orm1m}.

From the definition of $\dot e$ in \eqref{eq:orm1e} and the definitions of $\M_1$, $\M_2$ and $\M_3$ it follows that $\abs{\dot \thetae(z)} \leq\abs {a {\bar x}}$ for all $z\in \M_1 \cup \M_2$ and $\abs{\dot \thetae(z)} \leq\abs {a {\bar x}} + 2 b\sqrt{\gamma}{\bar x}^2$ for all $z\in \M_3$. Therefore
\be \label{phiz1}\abs{\dot e(z)} \leq \abs {a {\bar x}} + 2 b\sqrt{\gamma} {\bar x}^2\ee
for all $z \in \M_0$. From the bounds in \eqref{phib1} and \eqref{phiz1} for $\dot \phi$ and $\dot e$ respectively Theorem \ref{thm:orm}(i) follows.\let\qed\relax\end{proof}
%
%
%
%
%
\begin{proof}[Proof of {\normalfont (ii)}] In order to evaluate the behavior of the trajectories on the surfaces $\Su_1$, $\Su_2$ and $\Su_3$, normal vectors are defined along the surfaces that point toward $\M_0$. The normal vectors are 
$$ \hat n_1 = [1,0]^{\mathsf T} ,\quad \hat n_2(z) = \left[\frac{-\partial e}{\partial \thetae}, 1 \right]_{z\in {\Su_2}}^{\mathsf T}, 
\quad \text{and}\quad  \hat n_3 = [-1,0]^{\mathsf T} $$
where
$ \frac{\partial e}{\partial \thetae} = \frac{-2 b {\bar x} a}{(a + b\thetae)^2} $. We then find that  $ \hat n_i^{\mathsf T}(z) \dot z (z)  \geq 0$ for ${z \in \Su_i}$ and $i = 1,2,3$.   From the general stability proof of the adaptive system with Lyapunov function $V = e^2 + \frac{1}{\gamma}\thetae^2$ once within $\M_3$ a trajectory cannot leave it. \let\qed\relax\end{proof}

\begin{proof}[Proof of {\normalfont (iii)}] For a trajectory to traverse from $z(t_0)$ to a magnitude less than $c\norm{z(t_0)}$ (such that $\norm{s(t_1)} \leq c\norm{s(t_0)}$) it must travel at least a distance $\norm{z(t_0)}(1-c)$ over which it has a maximum rate of $d_z$ therefore
\ben T \geq \frac{\norm{z(t_0)}(1-c)}{d_z} \label{eq:T1ORM1}. \qedhere\een 
\end{proof}

\begin{proof}[Proof of Theorem \ref{thm:orms}]
The results from Theorem \ref{thm:orm} illustrate that for an input which provides persistent excitation of the reference model, there exists an unbounded region where the adaptive system is UASL and not ESL. For the system to possess ESL, the lower bound in  \eqref{eq:Tbound} needs to be dependent only on $c$ and independent of $z(t_0)$, see Remark \ref{rem:uasl} with $c$ analogous to $\eta/\rho$. The lower bound on $T$ is therefore sufficient to prove that ESL is \benone{ not possible} \bencrossone{ impossible }.\end{proof}

It can also be shown that the learning rate, $\dot \phi$,  of the adaptive parameter  tends to zero as the initial adaptive parameter error $\phi(t_0)$ tends to negative infinity inside $\M_1$. In the previous theorem we only showed that $\dot \phi$ is uniformly bounded for all initial conditions inside the larger set $\M_0$. Thus, not only is ESL impossible, there is an unbounded \bencrossone{ {\em sticking} } region in the base of $\M_1$ where adaptation occurs at a slower and slower rate the deeper the initial condition starts in the trough of $\M_1$. This \bencrossone{ sticking } \bencrossone{ affect } \benone{ effect }is visualized through simulation examples in a later section.

\begin{cor}\label{lem:sticko} For the error dynamics $z(t)$ defined by the differential equation in \eqref{eq:orm1e} with $r(t)={{\bar r}}$ and $x_m(t_0)={\xm}$ it follows that 
$\dot\phi(e,\phi) \to 0$ as $\phi \to -\infty$ with $[e, \phi]^\T \in \M_1$.
\end{cor}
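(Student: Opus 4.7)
The plan is to read off the two factors appearing in the update law
$\dot\phi(e,\phi)=-\gamma e(e+\bar x)$
directly from the geometric definition of $\M_1$, and show that for points in $\M_1$ the factor $(e+\bar x)$ shrinks to zero as $\phi\to-\infty$ while $|e|$ stays bounded. Since both factors are controlled by the explicit inequalities that define $\M_1$, the argument should be a short chain of algebraic manipulations rather than any dynamical systems analysis.

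First I would unpack the region. By \eqref{m11} together with the definition of $\PP_1$, a point $[e,\phi]^{\mathsf T}\in\M_1$ satisfies
$-\bar x<e<\dfrac{(a-b\phi)\bar x}{a+b\phi}$
with $\phi<a/b$. Recalling that $a<0$ and $b>0$, the inequality $\phi<a/b$ gives $a+b\phi<2a<0$, which is the key sign I will rely on repeatedly.

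Next, I would bound the factor $(e+\bar x)$. The lower bound on $e$ gives $e+\bar x>0$, and the upper bound gives
$e+\bar x<\dfrac{(a-b\phi)\bar x}{a+b\phi}+\bar x=\dfrac{2a\bar x}{a+b\phi}.$
Because $2a\bar x<0$ and $a+b\phi<0$, this upper bound is positive, and as $\phi\to-\infty$ it clearly tends to $0^{+}$. Hence $|e+\bar x|\to 0$ uniformly in $e$ as $\phi\to-\infty$ within $\M_1$.

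Finally I would bound $|e|$. The upper envelope $\dfrac{(a-b\phi)\bar x}{a+b\phi}$ tends to $-\bar x$ as $\phi\to-\infty$, so for all sufficiently negative $\phi$ one has $|e|<2\bar x$ throughout $\M_1$. Combining with the previous step,
$|\dot\phi(e,\phi)|=\gamma|e|\,|e+\bar x|\le 2\gamma\bar x\cdot\dfrac{2|a|\bar x}{|a+b\phi|}\longrightarrow 0$
as $\phi\to-\infty$ inside $\M_1$, which is the desired conclusion. The only place any care is needed is in tracking signs when passing from the definition of $\M_1$ to the simplified upper bound on $e+\bar x$; everything else is essentially a limit computation, so I do not anticipate a genuine obstacle.
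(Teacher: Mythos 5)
Your proposal is correct and follows essentially the same route as the paper: the paper sets $\Delta = e+\bar x$ and bounds $0\le\Delta\le \frac{2a}{a+b\phi}\bar x\to 0$ as $\phi\to-\infty$, which is exactly your bound on the factor $(e+\bar x)$, while the boundedness of $|e|$ on $\M_1$ supplies the other factor. If anything, your direct factorization $\dot\phi=-\gamma e(e+\bar x)$ is slightly cleaner than the paper's expansion in $\Delta$ (which contains a minor algebraic slip in the intermediate polynomial, though not in the conclusion).
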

\begin{proof}
For fixed  $\phi$ and an $e$ such that $[e, \phi]^\T\in\M_1$, which we will assume from this point forward in the proof, it follows that $ - {\bar x} \leq e \leq \frac{a-b\phi}{a+b\phi}{\bar x} $ per the definition of $\M_1$ in \eqref{m11}. Written another way, \be\label{eq:pft1} e=-{\bar x}+\Delta \ee where $\Delta \in \left[0,\frac{2a}{a+b\phi}{\bar x} \right]$. Substitution of \eqref{eq:pft1} into the definition of $\dot \phi$ from \eqref{eq:orm1e} it follows that 
$$ \dot \phi = -\gamma (x_{mo} +\Delta)^2 + \gamma {\bar x}({\bar x}-\Delta).$$ After expanding and canceling terms the above equation reduces to \be\label{eq:tt2}\dot \phi = -\gamma \bigl(3 {\bar x} \Delta+\Delta^2\bigr).\ee From the fact that $\Delta \leq \frac{2a}{a+b\phi}{\bar x} $ it follows that $\lim_{\phi\to-\infty}\Delta = 0$ (recall that $a<0$). Using this limiting value of $\Delta$ and \eqref{eq:tt2} it follows that $\lim_{\phi\to-\infty} \dot \phi =0$ when $[e, \phi]^\T \in \M_1$.
\end{proof}

This corollary helps connect the results from this section back to our definitions of PE and PE$^*$, and Remark \ref{rem:1}. While it is possible for $x_m\in\mathrm{PE}$ our analysis technique only allowed us to conclude that $x\in\mathrm{PE}^*$. This was characterized by the fact that in order for $x$ to maintain the same level of excitation, which we referred to as $\alpha^\prime$ in \eqref{eq:pxm}, the time window over which the excitation was measured, $ pT$ in \eqref{eq:pxm}, would have to increase as the norm of the initial conditions of the system increased. This is precisely what is occurring in the bottom of $\M_1$. In the bottom of this region it follows by definition that $\abs{x} \leq \Delta$ which tends to zero as $\phi(t_0)$ decreases to negative infinity, all the while the speed at which the state can leave this region is decreasing as well. 


\subsection{Scalar CRM Adaptive System}

We now consider a modified adaptive system in which the reference model contains a feedback loop with the state error. The plant is the same as that in \eqref{eq:orm1} with and identical control law and the same update law as that in \eqref{eq:orm3}. The reference model however is now defined as 
\be \dot x_m(t) = a x_m(t) + b r(t) - \ell e(t) \label{eq:ref_crm}\ee
where $\ell<0$ and ${\bar x}$ no longer. 
%
%
Throughout this section it is assumed that ${\bar r} > 0$ is a constant, however, no longer does  $x_m(t) = {\bar x}$ for all time. Unlike in the ORM cases, the reference model dynamics cannot be ignored. The resulting system can be represented as
\begin{align}
\dot z(t; x_0,a, b, \gamma, {\bar r},\ell) =  \left[ \begin{array}{l}
\dot x_m(t) \\
\dot e(t)\\
\dot \phi(t)
\end{array}\right] = \left[ \begin{array}{l}
a x_m(t) + b {\bar r} -\ell e(t)\\
(a+\ell) e(t) + b \phi(t) x(t) \\
-\gamma e(t) x(t)
\end{array}\right]. \label{eq:CRM1}
\end{align}
We will show that this modified adaptive system cannot be ESL and for the specific $r(t)$ chosen is UASL.

\begin{thm}\label{thm:stab_crm}
For the system defined in equation \rep{eq:CRM1} with  $r(t) \equiv \overline r$, $\overline r\neq 0$ the equilibrium of $z$ is UASL.\end{thm}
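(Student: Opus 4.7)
The plan is to follow the same autonomous Lyapunov / LaSalle--Krasovskii route used for Theorem \ref{thm:orm1}, the essential difference being that in the CRM case $x_m$ is a genuine state variable (driven by $e$ through the feedback term $-\ell e$), so the Lyapunov candidate must eventually dominate the $x_m$-direction as well.

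First I would shift to error coordinates. With $r(t)\equiv\bar r$, define $\bar x\triangleq -b\bar r/a$ so that $(x_m,e,\phi)=(\bar x,0,0)$ is the unique equilibrium of \eqref{eq:CRM1}, and let $\tilde x_m\triangleq x_m-\bar x$. In these shifted coordinates the system reads
\begin{align*}
\dot{\tilde x}_m &= a\tilde x_m-\ell e,\\
\dot e &= (a+\ell)e+b\phi\,x,\\
\dot\phi &= -\gamma e\,x,
\end{align*}
with $x=e+\tilde x_m+\bar x$, and with equilibrium at the origin. Re-running the cancellation that produced $\dot V=2ae^2$ in the proof of Theorem \ref{thm:orm1} with $V(e,\phi)=e^2+\phi^2/\gamma$ now yields $\dot V=2(a+\ell)e^2$, which is strictly negative in $e$ since $a<0$ and $\ell<0$.

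To capture the $\tilde x_m$ direction I would augment the candidate to
\[
W(\tilde x_m,e,\phi)\triangleq e^2+\tfrac{1}{\gamma}\phi^2+\mu\,\tilde x_m^2,
\]
for a small $\mu>0$ to be chosen, giving
\[
\dot W=2(a+\ell)e^2+2a\mu\,\tilde x_m^2-2\mu\ell\,\tilde x_m e.
\]
The indefinite cross term is absorbed by Young's inequality: choose a weight $\epsilon<2|a|/|\ell|$ to keep the $\tilde x_m^2$ coefficient negative, and then pick $\mu$ small enough to preserve negativity of the $e^2$ coefficient. This yields $\dot W\le -c_1 e^2-c_2\,\tilde x_m^2$ for some $c_1,c_2>0$. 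Since $W$ is positive definite and radially unbounded on $\RR^3$ and $\dot W\le 0$, the shifted origin is uniformly stable and all trajectories are uniformly bounded.

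Finally I would invoke LaSalle's invariance principle, which is available because $r\equiv\bar r$ renders \eqref{eq:CRM1} autonomous. The set $\{\dot W=0\}$ reduces to $\{\tilde x_m=0,\ e=0\}$; on the largest invariant subset $\tilde x_m$ and $e$ vanish identically, so $\dot e\equiv 0$ forces $b\phi\,x\equiv b\phi\bar x\equiv 0$, and because $b\bar x\neq 0$ this forces $\phi\equiv 0$. Hence the only invariant subset is the origin, every trajectory converges to it, and together with the uniform stability and uniform boundedness already obtained (and the autonomy of the system delivering uniformity in $t_0$) we conclude UASL. The main obstacle is selecting $(\mu,\epsilon)$ so that the quadratic form in $(\tilde x_m,e)$ is simultaneously negative semi-definite and compatible with $W$ remaining a valid positive-definite, radially unbounded Lyapunov candidate; once that is secured the invariant-set calculation is light, and the hypothesis $\bar r\neq 0$ (hence $\bar x\neq 0$) is precisely what rules out a hidden invariant submanifold in the $\phi$-direction.
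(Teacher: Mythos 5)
Your proposal is correct, and it follows the same basic route as the paper (exploit autonomy under $r\equiv\bar r$, use a quadratic Lyapunov candidate, then invoke the LaSalle--Krasovskii invariance principle), but with one substantive refinement worth noting. The paper's proof uses the unaugmented candidate $V(e,\phi)=e^2+\phi^2/\gamma$ from \eqref{eq:lyaporm}, which does not depend on $x_m$ at all and is therefore only positive \emph{semi}-definite in the full three-dimensional state $z=[x_m,e,\phi]^\T$; the conclusion that $z=[\bar x,0,0]^\T$ is uniformly stable and bounded in the $x_m$-direction is left implicit there, resting on the cascade structure in which $\dot x_m=ax_m+b\bar r-\ell e$ is an exponentially stable scalar system driven by $e$. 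Your augmentation $W=e^2+\phi^2/\gamma+\mu\tilde x_m^2$ with the Young's-inequality choice of $(\mu,\epsilon)$ makes this explicit: $W$ is positive definite and radially unbounded in the full state, $\dot W\le -c_1e^2-c_2\tilde x_m^2$, and the set $\{\dot W=0\}$ already pins down $e=\tilde x_m=0$, so the invariant-set computation forcing $\phi\equiv 0$ (via $b\phi\bar x\equiv 0$ and $\bar x\neq 0$) is immediate. What your version buys is a self-contained argument for uniform stability and uniform boundedness of the whole state; what the paper's version buys is brevity. One cosmetic caveat you inherit from the paper: the cancellation giving $\dot V=2(a+\ell)e^2$ is exact only with the paper's implicit normalization (e.g.\ $b=1$ or a $b/\gamma$ weight on the $\phi^2$ term), but that affects the paper's proof of Theorem \ref{thm:orm1} identically and does not change the argument.
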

\begin{proof}
Consider the Lyapunov candidate in \eqref{eq:lyaporm} and differentiating along the dynamics in \eqref{eq:CRM1} it follows that $\dot V(e,\thetae) = 2(a+\ell) e^2$.
Since $V > 0$ for all $z\neq 0$, $\dot V \leq 0$ for all $[e \; \thetae]^{\mathsf T} \in \RR^2$ and $V \to \infty \; as \;z \to \infty$, it follows that $z=[{\bar x}, 0, 0]^\T$ is uniformly stable in the large. Since the system is autonomous it follows from the invariance principle that $z=[{\bar x}, 0, 0]^\T$ is UASL as well.
\end{proof}

Now a number of  regions in the state-space ($\RR^3$) are defined which allow the construction and proof of this subsection's main result which mirrors the results of Theorem \ref{thm:orm}. In particular, three regions will be defined. It will then be shown that a specific region $\M_0$, the union of these three regions, will remain invariant. As this region $\M_0$ is infinite and the vector field defined by \rep{eq:CRM1} has a finite maximum velocity, we can conclude that CRM adaptive systems do not posses exponential stability in the large but are at best UASL.

Define a subset of the state-space, $\PP_1 \subset \RR^3$
$$\PP_1 \triangleq \left\{ [x_m,e,\thetae]^{\mathsf T} \left| \thetae < \frac{a + \ell}{b}, \frac{b{\bar r}}{a + \ell} \leq x_m \leq {\bar x}  ,[x_m,e,\thetae]^{\mathsf T} \in \RR^3 \right.\right\}$$
and within the subset $\PP_1$ a region
$$\M_1 \triangleq \left\{ [x_m,e,\thetae]^{\mathsf T} \left| -x_m \leq e \leq \frac{x_m (a + \ell + b\thetae)}{a+  \ell -b\thetae}, [x_m,e,\thetae]^{\mathsf T} \in \PP_1 \right. \right\}.$$
Define a second subset of the state-space, $\PP_2 \subset \RR^3$
$$\PP_2 \triangleq \left\{ [x_m,e,\thetae]^{\mathsf T} \left| \frac{a + \ell}{b} \leq \thetae < 0, \frac{b{\bar r}}{a+ \ell} \leq x_m \leq {\bar x}  , [x_m,e,\thetae]^{\mathsf T} \in \RR^3 \right. \right\}$$
and within this subset a region
$$\M_2 \triangleq \left\{ [x_m,e,\thetae]^{\mathsf T} \left| -x_m \leq e \leq 0, [x_m,e,\thetae]^{\mathsf T} \in \PP_2 \right. \right\}.$$
A third region is defined as
$$\M_3 \triangleq \left\{ [x_m,e,\thetae]^{\mathsf T} \left|  e^2 + \frac{1}{\gamma}\thetae^2 \leq {\bar x}^2, 0\leq x_m \leq 2 {\bar x}, [x_m,e,\thetae]^{\mathsf T} \in \RR_3 \right. \right\}$$
The union of these three $\M$ regions is then the invariant set $\M_0$, defined as
\be\label{eq:crm1m} \M_0 \triangleq \M_1 \cup \M_2 \cup \M_3 \ee
The three regions are shown in Figure \ref{fig:e_theta}. Four surfaces of this region will be used in the proof of the following theorem
\begin{align}\Su_1 &\triangleq \left\{ [x_m,e,\thetae]^{\mathsf T} \left| e = \frac{x_m (a+ \ell + b\thetae)}{a+ \ell -b\thetae}, [x_m,e,\thetae]^{\mathsf T} \in \PP_1 \right. \right\}\label{s21}\\
\Su_2 &\triangleq \left\{ [x_m,e,\thetae]^{\mathsf T} \left| e = -x_m, [x_m,e,\thetae]^{\mathsf T} \in \PP_1 \cup \PP_2 \right. \right\}\label{s22}\\
\Su_3 &\triangleq \left\{ [x_m,e,\thetae]^{\mathsf T} \left| e = 0, [x_m,e,\thetae]^{\mathsf T} \in \PP_2 \right. \right\}\notag\\
\Su_4 &\triangleq \left\{ [x_m,e,\thetae]^{\mathsf T} \left|  e^2 + \frac{1}{\gamma}\thetae^2 = {\bar x}^2, [x_m,e,\thetae]^{\mathsf T} \in \M_0 \right. \notag\right\}\end{align}
\begin{figure}[t!]
\centering  \label{e_theta}
\begin{minipage}{.5\textwidth}
  \centering
  \includegraphics[width=2.75in]{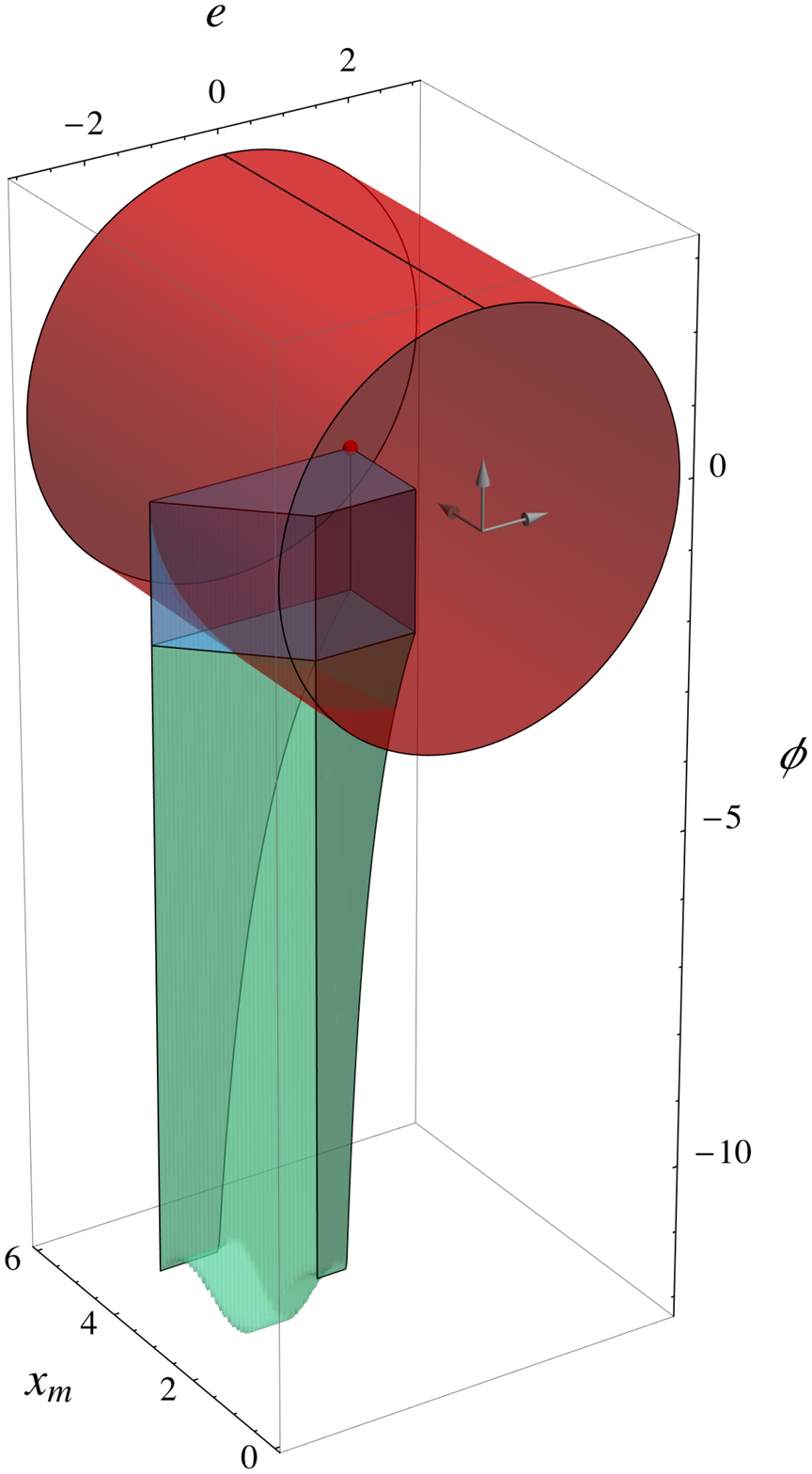}
  \label{fig:test1}
\end{minipage}%
\begin{minipage}{.5\textwidth}
  \centering
  \includegraphics[width=2.75in]{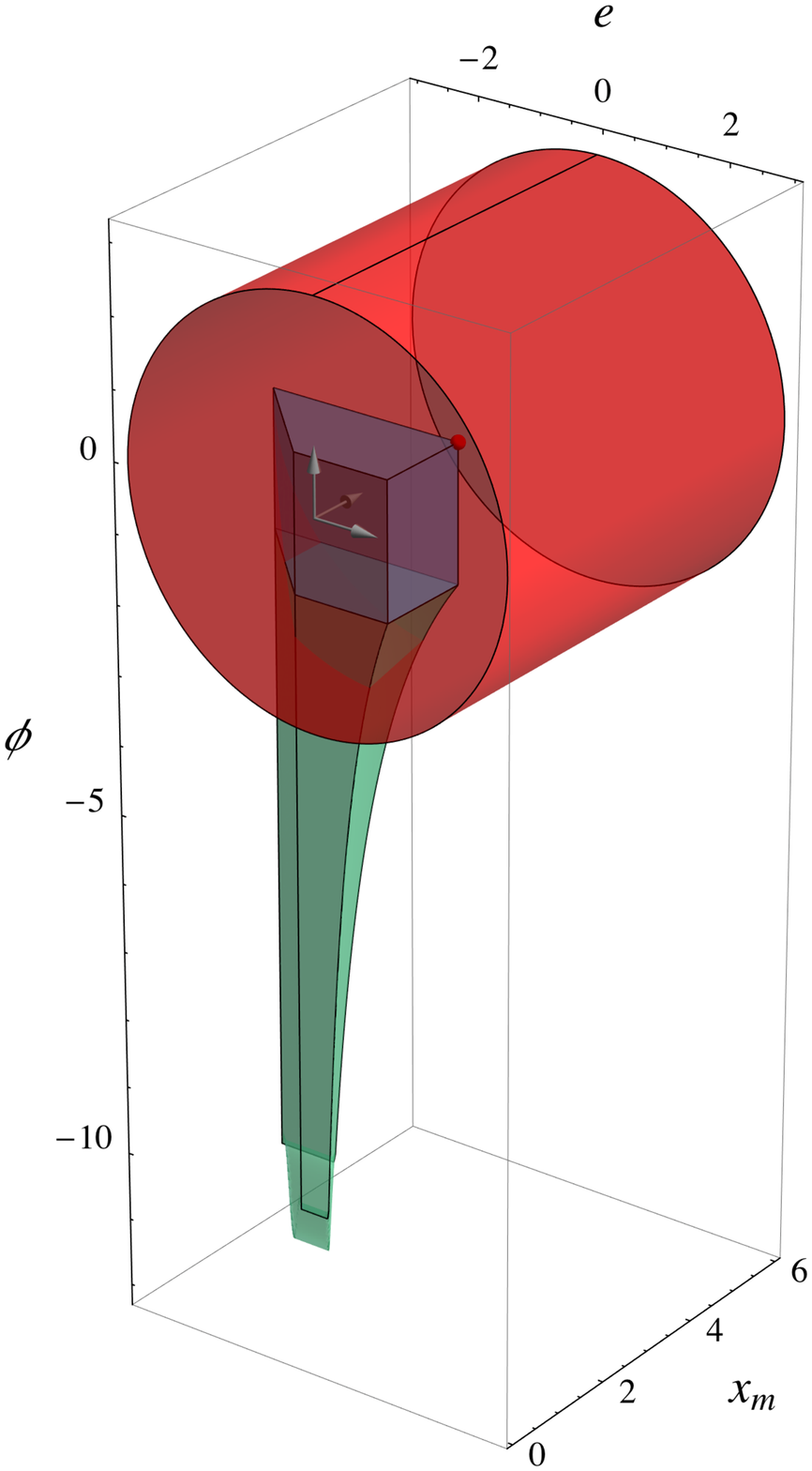}
  \label{fig:test2}
\end{minipage}
\caption{The three regions $\M_1$ (green), $\M_2$ (blue) and $\M_3$ (red) whose union results in the invariant set $\M_0$. }\label{fig:e_theta}
\end{figure}

%



\begin{thm} For the error dynamics $z(t)$ with $r(t)={{\bar r}}$, $\M_0$ as defined in \eqref{eq:crm1m}, and $s(t; t_0, z(t_0))$ the transition function of the differential equation \rep{eq:CRM1}, the following hold
\begin{enumerate}[label={\normalfont (\roman*)}]
\item $\norm{\dot z} \leq d_z$ for all $ z \in \M_0$ where \
\ben d_z \triangleq \sqrt{(\abs{(a+ \ell) {\bar x}} + 2 b \sqrt{\gamma}{\bar x}^2)^2 + (2\gamma {\bar x}^2)^2 + (\abs{(a+\ell) {\bar x}} +\bar r)^2}\een
\item $\M_0$ is an invariant set.
\item A trajectory beginning at ${z(t_0)\in \M_0}$ will converge to a fraction of its original magnitude at time $t_1$, with
\be T \geq \frac{\norm{z(t_0)}(1-c)}{d_z} \label{eq:Tbound} \ee
where 
$c = \frac{\norm{s(t_1; t_0, z(t_0))}}{\norm{s(t_0; t_0, z(t_0))}}$ and $T = t_1 - t_0$.
\end{enumerate}
\end{thm}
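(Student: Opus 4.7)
The proof mirrors the three-part structure of Theorem \ref{thm:orm}, adapted to the three-dimensional state $z = [x_m, e, \phi]^\T$ and the CRM dynamics \eqref{eq:CRM1}. The qualitatively new ingredient compared with the ORM proof is that $x_m$ is no longer constant; its dynamics $\dot x_m = a x_m + b \bar r - \ell e$ contribute the third component of $\norm{\dot z}$ and produce the additional $(\abs{(a+\ell)\bar x}+\bar r)^2$ term under the square root in $d_z$.

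For part (i) I would bound $\abs{\dot x_m}$, $\abs{\dot e}$, and $\abs{\dot \phi}$ region by region and take the worst case. In $\M_1 \cup \M_2$ the defining constraints force $-x_m \leq e \leq 0$ (the $\phi$-dependent upper bound on $e$ in \eqref{s21} is nonpositive because $\phi < 0$ and $a+\ell < 0$), so $x = e + x_m \in [0, x_m]$ and hence $\abs{ex} \leq x_m^2/4 \leq \bar x^2/4$, bounding $\dot \phi = -\gamma e x$. The key subtlety arises from $\dot e = (a+\ell)e + b\phi x$ in $\M_1$ where $\phi$ is unbounded below: writing $e = -x_m + \Delta$ with $0 \leq \Delta \leq 2(a+\ell)x_m/(a+\ell-b\phi)$ shows that $x = \Delta$ decays like $1/\abs{\phi}$, so $b\phi x$ stays bounded as $\phi \to -\infty$. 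In $\M_3$ the ellipsoidal constraint directly gives $\abs{e} \leq \bar x$ and $\abs{\phi} \leq \sqrt{\gamma}\,\bar x$, which together with $0 \leq x_m \leq 2\bar x$ bounds all three components. Finally $\dot x_m = a(x_m - \bar x) - \ell e$ (using $b\bar r = -a\bar x$) is bounded throughout $\M_0$. Collecting these termwise bounds yields the stated $d_z$.

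For part (ii) I would prove invariance of $\M_0$ surface by surface. On $\Su_2$ ($e = -x_m$) one has $x = 0$, so $\dot \phi = 0$, $\dot e = -(a+\ell)x_m$, and $\dot x_m = (a+\ell)x_m + b\bar r$; adding gives $\dot e + \dot x_m = b\bar r > 0$, so trajectories move into the interior $e + x_m > 0$. On $\Su_3$ ($e = 0$ in $\M_2$) one has $\dot e = b\phi x_m < 0$, pushing trajectories into $e < 0$. On the curved surface $\Su_1$ I would take the inward normal as the gradient of the defining function $e - x_m(a+\ell+b\phi)/(a+\ell-b\phi)$ and verify $\hat n^\T \dot z \geq 0$ using all three components of \eqref{eq:CRM1}; this mirrors the ORM calculation but picks up an additional $x_m$-contribution. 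For $\Su_4$ the Lyapunov function of Theorem \ref{thm:stab_crm} satisfies $\dot V \leq 0$, so the ellipsoid $e^2 + \phi^2/\gamma \leq \bar x^2$ is forward invariant. Analogous sign checks on the $x_m$-boundaries of each $\M_i$ (for instance, at $x_m = \bar x$ in $\M_1 \cup \M_2$, $\dot x_m = -\ell e \leq 0$ since $e \leq 0$) complete the invariance argument.

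Part (iii) is then the same distance/speed estimate as in Theorem \ref{thm:orm}: a trajectory must traverse Euclidean distance at least $\norm{z(t_0)}(1-c)$ to shrink its norm by factor $c$, and the uniform bound $\norm{\dot z} \leq d_z$ from (i) yields the stated lower bound on $T$. I expect the main obstacle to be the invariance bookkeeping in (ii): in three dimensions $\Su_1$ is a curved surface whose inward normal depends nontrivially on both $x_m$ and $\phi$, and the $x_m$-range boundaries of the three regions have no ORM counterpart, so each face of $\M_0$ must be checked consistently to ensure trajectories cannot leak out of the invariant set.
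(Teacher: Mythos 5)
Your proposal is correct and follows essentially the same route as the paper's own proof: componentwise bounds on $\dot x_m$, $\dot e$, $\dot\phi$ over each $\M_i$ for part (i), a surface-by-surface inward-normal check (with $\Su_4$ handled as a Lyapunov level set) for part (ii), and the distance-over-maximum-speed estimate for part (iii). If anything you are more thorough than the paper, which states the component bounds without derivation and does not explicitly verify the flow on the $x_m$-range faces of $\M_1$, $\M_2$, and $\M_3$ that you correctly identify as additional boundary pieces of $\M_0$.
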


\begin{proof}[Proof of {\normalfont (i)}]Each component of the vector field is bounded:
\begin{align*} 
\abs{\dot \thetae(z)} &\leq 2\gamma x_0^2 \\
\abs{\dot e(z)} &\leq \abs{(a+\ell){\bar x}} + 2 b \sqrt{\gamma} \bar x^2  \\
\abs{\dot x_m(z)} &\leq \abs{(a+\ell){\bar x}}+ \bar r 
\end{align*}
when $z \in \M_0$, and thus $\norm{\dot z} \leq d_z$.
\let\qed\relax\end{proof}

\begin{proof}[Proof of {\normalfont (ii)}]In order to evaluate the behavior of the trajectories on the surfaces of $\M_1$ and $\M_2$, normal vectors are defined along the surfaces. The normal vectors $\hat n_2$ and $\hat n_3$ have trivial definitions easily determined by inspection. The normal vector $\hat n_1$ is constructed using the cross product of two tangential vectors $\hat n_1 = \hat t_1 \otimes \hat t_2$
where
$$ \hat t_1 = \left[1 \;\; 0 \;\; \frac{\partial e}{\partial x_m} \right]^{\mathsf T}_{z\in \Su_1} \quad \text{and} \quad \hat t_2 = \left[1 \;\; \frac{\partial e}{\partial x_m} \;\; 0\right]^{\mathsf T}_{z\in \Su_1}.$$
It follows directly that ${ \hat n_i^{\mathsf T}(z) \dot z (z)  \geq 0}$ for ${z \in \Su_i}$ and $i = 1,2,3$. From the stability analysis in the proof of Theorem \ref{thm:stab_crm} we know that $\Su_4$ is simply a level set of the Lyapunov function and thus $\M_4$ is invariant. Therefore no trajectory can exit $\M_0$ making it an invariant set.
\let\qed\relax\end{proof}

\begin{proof}[Proof of {\normalfont (iii)}] Identical to the proof of item (iii) in Theorem \ref{thm:orm}. \end{proof}

Just as with an ORM, with a CRM the dynamics are at best UASL. The \benone{ region of slow convergence} \bencrossone{ sticking regime } is present in  CRM adaptive control as well and a similar corollary holds.
\begin{cor}\label{cor2} For the error dynamics $z(t)$ defined by the differential equation in \eqref{eq:CRM1} with $r(t)={{\bar r}}$ it follows that 
$\dot\phi(x_m,e,\phi) \to 0$ as $\phi \to -\infty$ with $[x_m,e, \phi]^\T \in \M_1$.
\end{cor}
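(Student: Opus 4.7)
The plan is to mirror the ORM argument in Corollary \ref{lem:sticko}, adapting the one-sided parameterization of $e$ to the three-dimensional setting of \eqref{eq:CRM1}. The first step is to fix $\phi < (a+\ell)/b$ and, in the region $\M_1$, exploit the defining inequalities to parameterize $e$. Since in $\M_1$ we have $-x_m \leq e \leq \frac{x_m(a+\ell+b\phi)}{a+\ell-b\phi}$, write
\begin{equation*}
e = -x_m + \Delta, \qquad \Delta \in \left[0,\; x_m\cdot\frac{2(a+\ell)}{a+\ell-b\phi}\right].
\end{equation*}
This identity, together with $x = e + x_m = \Delta$, converts $\dot\phi = -\gamma e x$ into
\begin{equation*}
\dot\phi = -\gamma(-x_m + \Delta)\Delta = \gamma x_m \Delta - \gamma \Delta^2,
\end{equation*}
which exposes $\Delta$ as the sole driver of adaptation rate once $x_m$ is held inside its bounded range $[\tfrac{b\bar r}{a+\ell},\bar x]$ given by $\PP_1$.

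Next I would track the upper bound on $\Delta$ as $\phi\to-\infty$. Because $a+\ell<0$ and $b>0$, the denominator $a+\ell-b\phi\to+\infty$, while the numerator $2x_m(a+\ell)$ is uniformly bounded for $x_m$ in its compact $\PP_1$-range. Hence the admissible interval for $\Delta$ shrinks to $\{0\}$, i.e., $\Delta\to 0$ uniformly in the remaining coordinates of $\M_1$. Feeding this limit back into $\dot\phi = \gamma x_m\Delta - \gamma\Delta^2$, and using boundedness of $x_m$, yields $\dot\phi\to 0$, which is the claim.

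The only real work is a clean verification that the admissible $\Delta$-interval lies in $[0,\,2x_m(a+\ell)/(a+\ell-b\phi)]$, which follows by adding $x_m$ to both sides of the inequalities defining $\M_1$ and simplifying. Everything else is direct substitution and a one-line limit argument. The main obstacle, insofar as there is one, is bookkeeping with signs: $\ell<0$, $a<0$, and $\phi\to-\infty$ all push various terms to large magnitudes, so one must confirm that $a+\ell-b\phi$ is indeed positive and tends to $+\infty$, and that $x_m$ stays bounded away from regions that would invalidate the parameterization. Beyond that, the structure of the proof is identical to Corollary \ref{lem:sticko}, and no additional ingredient from the CRM dynamics on $\dot x_m$ or $\dot e$ is needed, because $\dot\phi$ depends on $x_m$ only through the product $ex = (-x_m+\Delta)\Delta$, which collapses to zero as $\Delta\to 0$.
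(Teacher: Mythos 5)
Your proof is correct and is exactly the argument the paper intends: the paper gives no explicit proof of this corollary, deferring to "a similar corollary holds," and your adaptation of the $\Delta$-parameterization from Corollary \ref{lem:sticko} (with $x=\Delta$, $\Delta_{\max}=2x_m(a+\ell)/(a+\ell-b\phi)\to 0$ as $\phi\to-\infty$, and $x_m$ confined to its bounded $\PP_1$-range) is precisely the intended reasoning. As a minor aside, your expansion $\dot\phi=\gamma x_m\Delta-\gamma\Delta^2$ is the algebraically correct analogue of the corresponding step in the ORM corollary, and the limit conclusion is unaffected.
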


\section{Simulation examples}
Simulations are now presented for the ORM adaptive system and the  CRM adaptive system. The main purpose of these simulations is to illustrate the invariance of their respective $\M_0$, and the slow convergence, especially the \benone{ sluggish} \bencrossone{ sticking } phenomenon that is treated in Corollaries \ref{lem:sticko} and \ref{cor2}. Before continuing to the results we need to distinguish between the surfaces in the ORM and CRM cases and define two new surfaces. First, let the following two surfaces in the ORM case be redefined as $\Su_{O1}= \Su_1$ and $\Su_{O2} = \Su_2$ where $\Su_1$ and $\Su_2$ are defined in \eqref{s11} and \eqref{s12}. Similarly for the CRM, $\Su_{C1}= \Su_1$ and $\Su_{C2} = \Su_2$ where $\Su_1$ and $\Su_2$ are defined in \eqref{s21} and \eqref{s22}. The two new surfaces to be defined pertain to the condition $\dot e=0$. For ORMs this surface is defined as 
\ben
\Su_{O5}  \triangleq \lb [e,\thetae]^{\mathsf T} \left|\ e = \frac{-\xm b \phi}{a + b \thetae} \right. \rb 
\een
and for the CRMs a similar curve is defined as
\ben
\Su_{C5}  \triangleq \lb [e,\thetae]^{\mathsf T} \left|\ e = \frac{-x_m b \phi}{a + b \thetae}, x_m=\frac{\ell e - b {\bar r}}{a} \right. \rb 
\een
where the second equation in the definition of $\Su_{C5}$ is derived from \eqref{eq:ref_crm} by setting $\dot x_m =0$.
Nine initial states are chosen specifically for each system, defined in Tables \ref{ICsORM} and \ref{ICsCRM}. Rather than defining numerical values for each initial condition, we choose them as points of intersection between two unique surfaces. The values of the parameters for the simulations are as follows
\be \label{eq:examplesys}
  \begin{array}{ccccccccc}
    a = -1, & \; & \ell = -1, & \; & \gamma = 1, & \; &
    b = 1,  &\; & r = 3.   
  \end{array}
\ee


\begin{table}
\begin{center}
\begin{tabular}{|c|c|c|c|}
  \hline
   & $\Su_{O1}$ & $\Su_{O5}$ & $\Su_{O2}$ \\ \hline
  $\thetae=-2$ & $z_1$ & $z_4$ & $z_7$ \\ \hline
  $\thetae=-4$ & $z_2$ & $z_5$ & $z_8$ \\ \hline
  $\thetae=-8$ & $z_3$ & $z_6$ & $z_9$ \\
  \hline
\end{tabular}
\end{center}
\caption{Initial conditions $z_i, i=1,2, \ldots, 9$ for the ORM example system. Each initial conditions, $z_i$, is the point of intersection of the two indicated surfaces in the corresponding row and column.} \label{ICsORM}
\end{table}
\begin{table}
\begin{center}
\begin{tabular}{|c|c|c|c|}
  \hline
    & $\Su_{C1}$ & $\Su_{C5}$ & $\Su_{C2}$ \\ \hline
  $\thetae=-2$ & $z_1$ & $z_4$ & $z_7$ \\ \hline
  $\thetae=-4$ & $z_2$ & $z_5$ & $z_8$ \\ \hline
  $\thetae=-8$ & $z_3$ & $z_6$ & $z_9$ \\
  \hline
\end{tabular}
\end{center}
\caption{Initial conditions $z_i, i=1,2, \ldots, 9$ for the CRM example system. Each initial conditions, $z_i$, is the point of intersection of the two indicated surfaces in the corresponding row and column.} \label{ICsCRM}
\end{table}

\begin{figure}[t!]
\begin{subfigmatrix}{2}
\subfigure[2D phase portrait for the ORM adaptive system with  initial conditions defined in table \ref{ICsORM}]{\psfrag{s}[cc][cc][.7]{$e$}
\psfrag{a}[Bl][Bl][.7]{$z_1$}
\psfrag{b}[Bl][Bl][.7]{$z_2$}
\psfrag{c}[Bl][Bl][.7]{$z_3$}
\psfrag{d}[tl][tl][.7]{$z_4$}
\psfrag{e}[tl][tl][.7]{$z_5$}
\psfrag{f}[tl][tl][.7]{$z_6$}
\psfrag{g}[Bl][Bl][.7]{$z_7\;$}
\psfrag{h}[Bl][Bl][.7]{$z_8\;$}
\psfrag{i}[cl][cl][.7]{$z_9\;$}
\psfrag{j}[Bl][Bl][.7]{$\Su_{O1}\;$}
\psfrag{k}[tl][bl][.7]{$\;\Su_{O2}$}
\psfrag{l}[bc][bc][.7]{$\Su_{O5}\;\;$}
\psfrag{t}[cc][cc][.8]{$\phi$}
\includegraphics[width=2.2in]{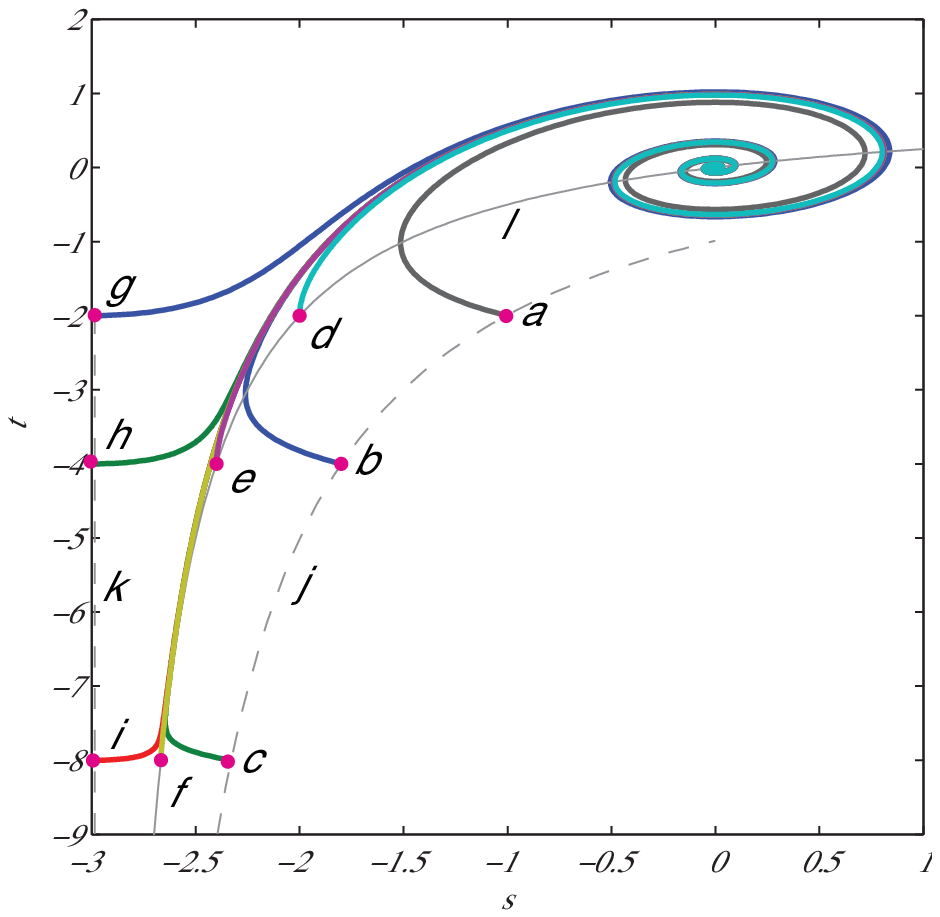}
\label{fig:ppORM}
}
\subfigure[2D projection of the 3D phase portrait for the CRM adaptive system with initial conditions defined in table \ref{ICsCRM}]{\psfrag{s}[cc][cc][.7]{$e$}
\psfrag{a}[Bl][Bl][.7]{$z_1$}
\psfrag{b}[Bl][Bl][.7]{$z_2$}
\psfrag{c}[Bl][Bl][.7]{$z_3$}
\psfrag{d}[tl][tl][.7]{$z_4$}
\psfrag{e}[tl][tl][.7]{$z_5$}
\psfrag{f}[tl][tl][.7]{$z_6$}
\psfrag{g}[cr][cr][.7]{$z_7\;$}
\psfrag{h}[cr][cr][.7]{$z_8\;$}
\psfrag{i}[cr][cr][.7]{$z_9\;$}
\psfrag{j}[Bl][Bl][.7]{$\Su_{C1}\;$}
\psfrag{k}[Br][Br][.7]{$\Su_{C2}\;$}
\psfrag{l}[Bc][Bc][.7]{$\Su_{C5}\;$}
\psfrag{t}[cc][cc][.8]{$\phi$}
\includegraphics[width=2.2in]{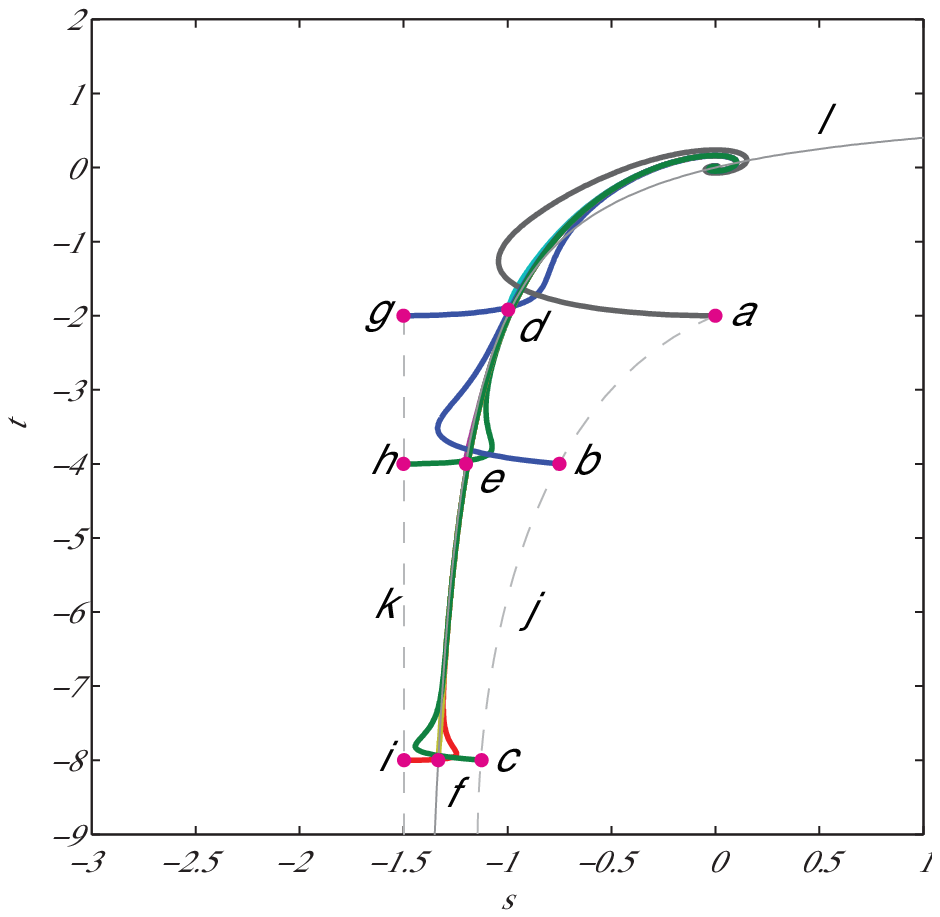}
\label{fig:ppCRM}
}
\end{subfigmatrix}
\caption{Phase portraits of the ORM and CRM adaptive systems}
\end{figure}

\begin{figure}[t!]
\begin{subfigmatrix}{2}
\subfigure[Simulation of the the ORM adaptive system.]{\psfrag{l100000000}[Bl][Bl][.7]{$z(0) = z_4$}
\psfrag{l200000000}[Bl][Bl][.7]{$z(0) = z_5$}
\psfrag{l300000000}[Bl][Bl][.7]{$z(0) = z_6$}
\psfrag{e}[Bl][Bl][.7]{$e$}
\psfrag{xm}[Bl][Bl][.7]{$x_m$}
\psfrag{xp}[Bl][Bl][.7]{$x$}
\psfrag{t}[Bl][Bl][.7]{$\thetae$}
\psfrag{time}[Bc][Bc][.7]{$t$}
\includegraphics[width=2.3in]{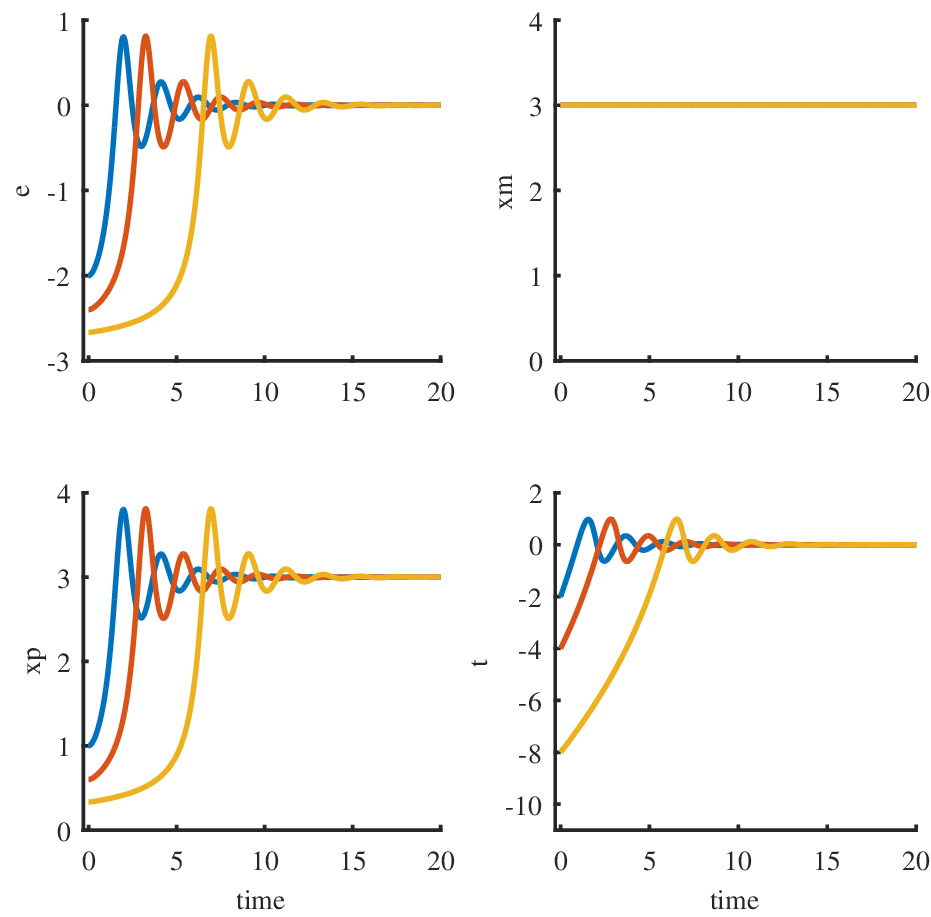}
\label{fig:timeORM}
}
\subfigure[Simulation of the the CRM adaptive system.]{\psfrag{l100000000}[Bl][Bl][.7]{$z(0) = z_4$}
\psfrag{l200000000}[Bl][Bl][.7]{$z(0) = z_5$}
\psfrag{l300000000}[Bl][Bl][.7]{$z(0) = z_6$}
\psfrag{e}[Bl][Bl][.7]{$e$}
\psfrag{xm}[Bl][Bl][.7]{$x_m$}
\psfrag{xp}[Bl][Bl][.7]{$x$}
\psfrag{t}[Bl][Bl][.7]{$\thetae$}
\psfrag{time}[Bc][Bc][.7]{$t$}
\includegraphics[width=2.3in]{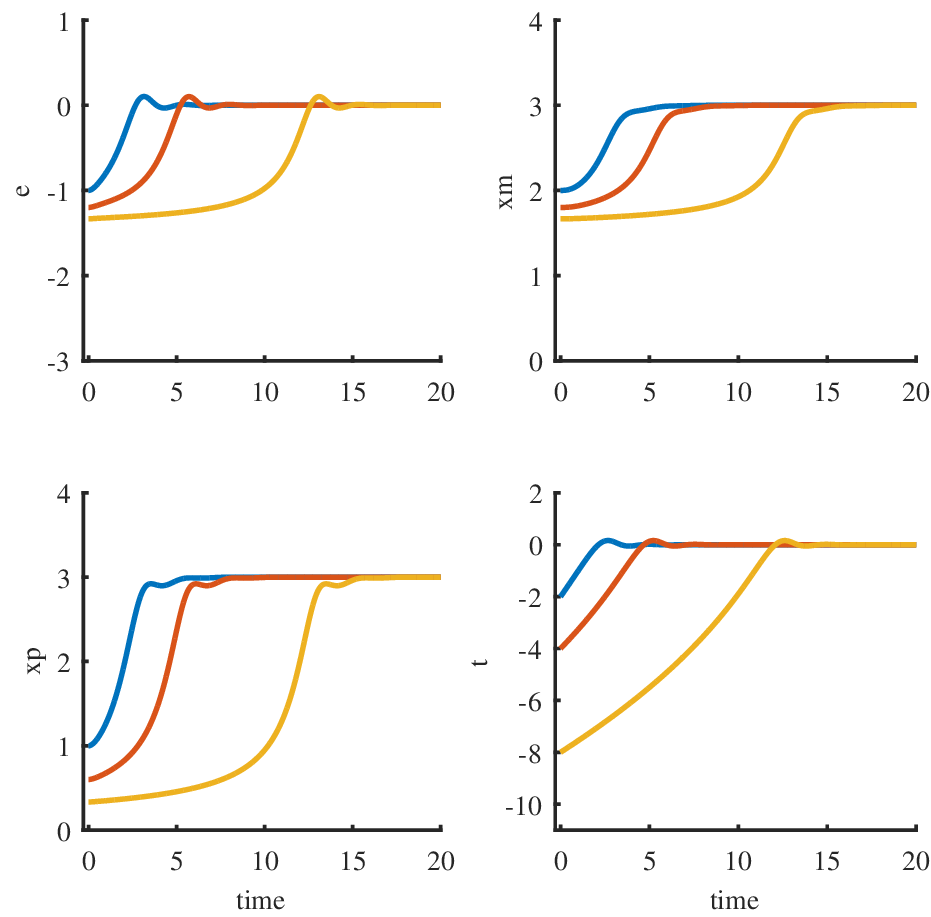}
\label{fig:timeCRM}
}
\end{subfigmatrix}
\caption{Time series trajectories of the ORM and CRM adaptive systems for intial conditions $z_4$ ({\color{ml1}blue}), $z_5$ ({\color{ml2}orange}), and $z_6$ ({\color{ml3}yellow}) as defined in Table \ref{ICsORM} and \ref{ICsCRM}.} \label{timeORM1}
\end{figure}
Figure \ref{fig:ppORM} contains the 2-dimensional phase portrait for trajectories of the ORM adaptive system resulting from each of the initial conditions of Table \ref{ICsORM}. Figure \ref{fig:ppCRM} contains the 2-dimensional projection of the 3-dimensional phase space trajectories of the CRM adaptive system resulting from each of the initial conditions of Table \ref{ICsCRM}. Before we proceed, we observe that in both Figures \ref{fig:ppORM} and \ref{fig:ppCRM}, there is an attractor that all initial conditions converge to. This attractor partially coincides with $\Su_{O5}$ and $\Su_{C5}$. We focus on those initial conditions that are closest to these attractors that are common to both ORM  and CRM adaptive systems, which are given by initial conditions $z_4$, $z_5$, and $z_6$. With these initial conditions we next discuss the \benone{ region of slow convergence } \bencrossone{ sticking regime  }in both adaptive systems.

We present time responses of $e$, $\thetae$, $x$, and $x_m$ for the ORM system in Figure \ref{fig:timeORM} and  the CRM adaptive system in Figure \ref{fig:timeCRM}, respectively, for the initial conditions $z_4$, $z_5$, and $z_6$. Defining $T_s$ as the settling time beyond which $\norm{z(t_0)-z(\infty)}$ reduces to 5\% of its initial value, we have that $T_s\in\{5.37,\;5.62,\;8.19\}$ for these three initial conditions for the ORM system and $T_s\in\{3.69,\;5.85,\;12.74\}$ for the CRM system. Notice that although $\frac{z(t_0)}{z(t_0+T_s)}$ is identical for all three trajectories, $T_s$ increases as $\norm{z(t_0)}$ increases, implying that the system is not exponentially stable in the large. 

Trajectories initialized at both $z_5$ and $z_6$ demonstrate the \benone{ slow convergence} \bencrossone{ {\em sticking} property } described in this paper, which is characterized by the nearly flat portion of the response of $e$ and $x$ prior to convergence.
From the third initial condition, $z_6$, the exacerbated \bencrossone{ sticking } \benone{ sluggish }effect in the CRM adaptive system can clearly be seen. The error convergence for large initial conditions is even slower compared to that of the ORM system. It was observed that this convergence became slower as $\abs{\ell}$ was increased further. It should be noted that these convergence properties co-exist with the absence of the oscillatory behavior in the CRM in comparison to the ORM. That is, the introduction of the feedback gain $\ell$ helps in producing a {\em smooth} adaptation, but not a {\em fast} adaptation. Increasing $\gamma$ along with $\ell$ can keep convergence times similar to those of the ORM while maintaining reduced oscillations.

\section{Conclusions}

\bencrossone{ Precise definitions of exponential stability along with initial condition dependent definitions of persistence of excitation have been given. With these definitions it has been shown that when the reference model is persistently exciting the direct adaptive control problem can at best be uniformly asymptotically stable in the large. This is illustrated for both ORM and CRM adaptive systems. } 

\benone{ In this paper, precise definitions of asymptotic and exponential stability are reviewed and a definition of weak persistent excitation is introduced, which is initial-condition dependent. With these definitions it has been shown that when persistent excitation conditions are imposed on the reference model, it results in weak persistent excitation of the adaptive system. The implication of this weak PE is that the speed of convergence is initial condition dependent, resulting in UASL of the origin in the underlying error system. Exponential stability \textit{in the large }can not be proven and claims of robustness should be based on the UASL property.}

\bibliography{master_2}

\appendix
\section{Definitions}

\begin{defn}[Piecewise smooth function \cite{yua77}] \label{def:P}Let $\mathcal C_\delta$ be a set of points in $[t_0,\infty)$ for which there exists a $\delta>0$ such that for all $t_1,t_2\in\mathcal C_\delta$, $t_1\neq t_2$ implies $\abs{t_1 - t_2}\geq \delta$. Then $\mathcal P_{[t_0,\infty)}$ is defined as the class of real valued functions on $[t_0,\infty)$ such that for every $u\in \mathcal P_{[t_0,\infty)}$, there corresponds some $\delta$ and $\mathcal C_\delta$ such that
\begin{enumerate}[label={\normalfont (\roman*)}]
\item $u(t)$ and $\dot u(t)$ are continuous and bounded on $[t_0,\infty) \setminus \mathcal C_\delta$ and
\item for all $t_1\in\mathcal C_\delta$, $u(t)$ and $\dot u(t)$ have finite limits as $t \to t_1^+$ and $t\to t_1^-$
\end{enumerate}
\end{defn}

\end{document}